\documentclass[12pt,a4paper]{article}

\usepackage{graphicx}
\usepackage{amsmath}
\usepackage{amssymb}
\usepackage{amscd}
\usepackage{amsthm}
\usepackage{cleveref}
\usepackage[noadjust]{cite}
\usepackage{enumitem}

\newtheorem{theorem}{Theorem}[section]
\newtheorem{corollary}{Corollary}

\newtheorem{lemma}[theorem]{Lemma}
\newtheorem{proposition}{Proposition}

\theoremstyle{definition}
\newtheorem{definition}[theorem]{Definition}
\newtheorem{remark}{Remark}
\newcommand{\EQ}{\begin{equation}}
\newcommand{\EN}{\end{equation}}

\newcommand{\zero}{{\mathbf{0}}}

\newcommand{\bh}{{\bf h}}
\newcommand{\bc}{{\bf c}}

\newcommand{\br}{{\bf r}}

\newcommand{\by}{{\bf y}}
\newcommand{\bx}{{\bf x}}

\newcommand{\bv}{{\bf v}}

\newcommand{\F}{\mathbb{F}}

\newcommand{\BB}{{\mathcal{B}}}
\newcommand{\DD}{{\mathcal{D}}}

\newcommand{\IA}{\operatorname{IA}}
\newcommand{\Gcd}{\operatorname{gcd}}
\newcommand{\Aut}{\operatorname{Aut}}

\title{Completely regular codes by concatenating Hamming codes}

\author{J. Borges, J. Rif\`{a}\footnote{email:~josep.rifa@autonoma.edu, joaquim.borges@autonoma.edu} \\
Department of Information and Communications Engineering,\\
 Universitat Aut\`{o}noma de Barcelona,\\
\and V. A. Zinoviev\footnote{e-mail:\, zinov@iitp.ru}\\A.A. Kharkevich Institute for Problems of Information
Transmission,\\ Russian Academy of Sciences}

\begin{document}
\maketitle

%
%
%
%


\begin{abstract}
We construct new families of completely regular codes by
concatenation methods. By combining parity check matrices of cyclic Hamming codes, we obtain families of completely regular codes. In all cases, we compute the intersection array of these codes. We also study when the extension of these codes gives completely regular codes.
Some of these new codes are
completely transitive.
\end{abstract}

\section{Introduction}
Let $\F_q$ be a finite field of the order $q$. A $q$-ary linear $[n,k,d;\rho]_q$-code $C$ is a
$k$-dimensional subspace of $\F_q^n$, where $n$ is the {\em
length}, $d$ is the {\em minimum distance}, $q^k$ is the
{\em cardinality} of $C$, and $\rho$ is the {\em covering radius}. For $q=2$, we omit the subscript $q$. The {\em packing radius} of $C$ is $e=\lfloor (d-1)/2 \rfloor$. Given any vector $\bv \in \F_q^n$, its
{\em distance to the code $C$} is $d(\bv,C)=\min_{\bx \in C}\{
d(\bv, \bx)\}$ and the covering radius of the code $C$ is
$\rho=\max_{\bv \in \F_q^n} \{d(\bv, C)\}$. Note that $e\leq \rho$. We denote by $~D=C+\bx~$ a
{\em coset} of  $C$, where $+$ means the component-wise addition
in $\F_q$.

For a given $q$-ary code $C$ of length $n$ and covering radius $\rho$,
define
\[
C(i)~=~\{\bx \in \F_q^n:\;d(\bx,C)=i\},\;\;i=0,1,\ldots,\rho.
\]
The sets $C(0)=C,C(1),\ldots,C(\rho)$ are called the {\em subconstituents} of $C$.

Say that two vectors $\bx$ and $\by$ are {\em neighbors} if
$d(\bx,\by)=1$. Denote by $\zero$ the all-zero vector.

\begin{definition}[\cite{Neum}]\label{de:1.1} A $q$-ary code $C$ of length $n$ and covering radius $\rho$ is {\em completely regular}, if
for all $l\geq 0$ every vector $x \in C(l)$ has the same number
$c_l$ of neighbors in $C(l-1)$ and the same number $b_l$ of
neighbors in $C(l+1)$. Define $a_l = (q-1){\cdot}n-b_l-c_l$ and
set $c_0=b_\rho=0$. The parameters $a_l$, $b_l$ and $c_l$ ($0\leq l\leq \rho$) are called {\em intersection numbers} and the sequence $\{b_0, \ldots, b_{\rho-1}; c_1,\ldots,
c_{\rho}\}$ is called the {\em intersection array} (shortly $\IA$) of $C$.
\end{definition}

Let $M$ be a monomial matrix, i.e. a matrix with exactly one
nonzero entry in each row and column. If $q$ is prime, then
the automorphism group of $C$, $\Aut(C)$, consists of all monomial ($n\times n$)-matrices $M$ over
$\F_q$ such that $\bc M \in C$ for all $\bc \in C$. If $q$ is a
power of a prime number, then $\Aut(C)$ also contains any field
automorphism of $\F_q$ which preserves $C$. The group $\Aut(C)$
acts on the set of cosets of $C$ in the following way: for all
$\pi\in \Aut(C)$ and for every vector $\bv \in \F_q^n$ we have
$\pi(\bv + C) = \pi(\bv) + C$.

\begin{definition}[\cite{Giu,Sole}]\label{de:1.3}
Let $C$ be a linear code over $\F_q$ with covering radius $\rho$.
Then $C$ is {\em completely transitive} if $\Aut(C)$ has $\rho +1$ orbits
when acts on the cosets of $C$.
\end{definition}

Since two cosets in the same orbit have the same weight
distribution, it is clear that any completely transitive code is
completely regular.

Completely regular and completely transitive codes are classical
subjects in algebraic coding theory, which are closely connected
with graph theory, combinatorial designs and algebraic
combinatorics. Existence, construction and enumeration of all such
codes are open hard problems (see \cite{BCN,Dam,Koo,Neum} and
references there).

It is well known that new completely regular codes can be obtained by direct sum of perfect codes or,
more general, by direct sum of completely regular codes with covering radius $1$ \cite{BZZ,Sole}.
In the current paper, we extend these constructions, giving several explicit
constructions of new completely regular and completely transitive codes, based on concatenation
methods.

\section{Preliminary results}

In this section we see several results we will need in the next sections.

\begin{lemma}[\protect{\cite{Neum}}]\label{graph}
Let $C$ be a completely regular code with covering radius $\rho$ and intersection array $\{b_0,\ldots,b_{\rho-1};c_1,\ldots,c_\rho\}$.
If $C(i)$ and $C(i+1)$, $0\leq i <\rho$, are two subconstituents of $C$, then
$$
b_i|C(i)|=c_{i+1}|C(i+1)|.
$$
\end{lemma}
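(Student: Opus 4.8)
The plan is to prove this by a double-counting argument on the edges of the Hamming graph on $\F_q^n$ that join the subconstituent $C(i)$ to the next subconstituent $C(i+1)$. Concretely, I would fix $i$ with $0\le i<\rho$ and consider the set of ordered neighbor pairs
\[
E_i=\{(\bx,\by)\in C(i)\times C(i+1):\; d(\bx,\by)=1\},
\]
and then compute $|E_i|$ in two different ways, once by summing over the first coordinate and once by summing over the second.

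Summing over $\bx\in C(i)$: since $C$ is completely regular, every $\bx\in C(i)$ has exactly $b_i$ neighbors lying in $C(i+1)$, so $|E_i|=b_i\,|C(i)|$. Summing over $\by\in C(i+1)$: again by complete regularity, writing $C(i)=C((i+1)-1)$, every $\by\in C(i+1)$ has exactly $c_{i+1}$ neighbors lying in $C(i)$, so $|E_i|=c_{i+1}\,|C(i+1)|$. Equating the two counts yields $b_i\,|C(i)|=c_{i+1}\,|C(i+1)|$, which is exactly the asserted identity.

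There is essentially no serious obstacle here; the work is purely bookkeeping. The two points worth making explicit are: (i) both $C(i)$ and $C(i+1)$ are nonempty because $0\le i<i+1\le\rho$ and $\rho$ is the covering radius, so the identity is not vacuous and the cardinalities are positive; and (ii) the two enumerations genuinely range over the same edge set $E_i$, since the relation $d(\bx,\by)=1$ is symmetric, so a neighbor of $\bx\in C(i)$ lying in $C(i+1)$ and a neighbor of $\by\in C(i+1)$ lying in $C(i)$ describe the same pair. With these remarks the argument is complete.
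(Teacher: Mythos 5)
Your double-counting argument is correct and complete: counting the ordered neighbor pairs between $C(i)$ and $C(i+1)$ once from each side immediately gives $b_i|C(i)|=c_{i+1}|C(i+1)|$. The paper itself offers no proof of this lemma (it is quoted from Neumaier's paper \cite{Neum}), and your argument is exactly the standard one behind that reference, so there is nothing to add.
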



\begin{definition}[\protect{\cite{GvT}}]\label{def:2.5}
A quasi-perfect $e$-error-correcting $q$-ary code $C$ is called
{\em uniformly packed} if there exist natural numbers $\lambda$ and $\mu$
such that for any vector $x$:
$$
B_{x,e+1} = \left\{
\begin{array}{cl}
  \lambda & \mbox{ if } d(x,C)=e, \\
  \mu     & \mbox{ if } d(x,C)=e+1.
\end{array}\right.
$$
\end{definition}

Van Tilborg \cite{vTi} (see also \cite{Lind,SZZ})
showed that no nontrivial codes of this kind exist for
$e>3$.

\begin{proposition}[\protect{\cite{GvT}}, see also \cite{SZZ}]
A uniformly packed code is completely regular.
\end{proposition}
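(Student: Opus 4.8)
The plan is to write down the entire intersection array of $C$ explicitly and then observe that every intersection number that appears is forced, so that $C$ meets Definition~\ref{de:1.1}. Being quasi-perfect and $e$-error-correcting, $C$ has covering radius $\rho=e+1$ (so the subconstituents are $C(0),\dots,C(e+1)$) and minimum distance $d\ge 2e+1$. Two rigidity consequences will do most of the work: every vector $\bx$ with $d(\bx,C)\le e$ has a \emph{unique} nearest codeword, and any two codewords lie at distance at least $d\ge 2e+1$.

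For $0\le l\le e-1$ a routine local analysis suffices: if $\bx\in C(l)$ has (unique) nearest codeword $\bc$, then among the $(q-1)n$ neighbors of $\bx$, the $l$ that change a coordinate of $\supp(\bx-\bc)$ to agree with $\bc$ lie in $C(l-1)$, the $(q-1)(n-l)$ that change a coordinate outside $\supp(\bx-\bc)$ lie in $C(l+1)$, and the rest remain in $C(l)$ — the bound $d\ge 2e+1$ being what guarantees that each neighbor's distance to the whole code equals its distance to $\bc$. Hence $c_l=l$ and $b_l=(q-1)(n-l)$ for $l\le e-1$, and the same argument gives $c_e=e$. For the outer shell, fix $\bx\in C(e+1)$; every neighbor of $\bx$ lies in $C(e)\cup C(e+1)$, and a neighbor $\by$ with $d(\by,C)=e$ must be obtained from $\bx$ by changing a single coordinate to agree with some codeword at distance exactly $e+1$ from $\bx$, in a coordinate where the two disagree; conversely each such (codeword, coordinate) pair gives a distinct such $\by$, distinctness coming from uniqueness of the nearest codeword of a point of $C(e)$. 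Since the number of codewords at distance $e+1$ from $\bx$ is the uniform-packing parameter $\mu$ (there being none nearer), we obtain $c_{e+1}=(e+1)\mu$.

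The hard part — and the only place the uniform-packing hypothesis is genuinely used, through the parameter $\lambda$ — is $b_e$. Fix $\bx\in C(e)$ with nearest codeword $\bc_0$. Running the previous paragraph in reverse, every neighbor $\by$ of $\bx$ with $d(\by,C)=e+1$ is $\bx$ with one coordinate $i\notin\supp(\bx-\bc_0)$ changed to a value $\ne\bx_i$, and all $(q-1)(n-e)$ vectors of this shape have $d(\by,C)\in\{e,e+1\}$; so $b_e=(q-1)(n-e)$ minus the number of these that lie in $C(e)$. Such a $\by$ has a unique nearest codeword $\bc''$ with $d(\by,\bc'')=e$, necessarily one of the $\lambda$ codewords with $d(\bx,\bc'')=e+1$; and for a fixed such $\bc''$ the bound $d(\bc_0,\bc'')\ge d\ge 2e+1$ together with $|\supp(\bx-\bc_0)|+|\supp(\bx-\bc'')|=2e+1$ forces $\supp(\bx-\bc_0)$ and $\supp(\bx-\bc'')$ to be disjoint, whence $\bc''$ accounts for exactly $e+1$ of the vectors of the above shape (one per coordinate of $\supp(\bx-\bc'')$), and distinct $\bc''$ give distinct $\by$. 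This yields $b_e=(q-1)(n-e)-(e+1)\lambda$ (the case $d=2e+2$, where $\lambda=0$, being subsumed). All of $b_0,\dots,b_e$ and $c_1,\dots,c_{e+1}$ are then independent of the chosen vector, so $C$ is completely regular; as a check, the forced identity $b_e\,|C(e)|=(e+1)\mu\,|C(e+1)|$ is precisely Lemma~\ref{graph} for $i=e$ and coincides with the classical double-count for uniformly packed codes. I expect the separation carried out in this last paragraph — distinguishing the neighbors of a point of $C(e)$ that fall back into $C(e)$ from those that advance to $C(e+1)$ — to be the main obstacle, since it is exactly there that one must convert the hypothesis into the minimum-distance rigidity above.
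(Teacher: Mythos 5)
Your argument is correct: unique decoding within the packing radius ($d\ge 2e+1$) pins down every neighbor count for the inner shells, and the uniform-packing parameters $\lambda,\mu$ give the two counts ($b_e$ and $c_{e+1}$) that are not forced by the minimum distance alone; the support-disjointness step via $2e+1\le d(\bc_0,\bc'')\le e+(e+1)$ is exactly the right rigidity, and the injectivity claims all reduce to uniqueness of the nearest codeword at distance $\le e$. Note, though, that the paper does not prove this proposition at all — it is quoted from \cite{GvT,SZZ}, and the classical route (reflected in Theorem~\ref{params}~(iii)--(iv)) is algebraic: one shows a uniformly packed quasi-perfect code has external distance $s=e+1=\rho$ and deduces complete regularity from Delsarte-type results on the outer distribution, rather than by inspecting neighbors. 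Your proof is therefore a genuinely different, more elementary and self-contained verification, and it has the bonus of producing the full intersection array $\{(q-1)n,\,(q-1)(n-1),\ldots,(q-1)(n-e)-(e+1)\lambda;\;1,2,\ldots,e,\,(e+1)\mu\}$ explicitly — the same kind of data the paper later computes case by case. The only caveat is notational: the paper never defines $B_{x,e+1}$, and if it denotes the number of codewords in the ball of radius $e+1$ (rather than on the sphere), your $\lambda$ and $\mu$ shift by one for vectors in $C(e)$; this changes the displayed formulas for $b_e$ but not the independence from $\bx$, so complete regularity is unaffected.
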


\begin{definition}
A $t$-$(v,k,\lambda)$-{\em design} is an incidence structure $(S,\BB)$,
where $S$ is a $v$-set of elements (called {\em points}) and $\BB$ is
a collection of $k$-subsets of points (called {\em blocks})
such that every $t$-subset
of points is contained in exactly $\lambda>0$ blocks ($0< t\leq
k\leq v$).
\end{definition}

In terms of incident matrix a $t$-$(v,k,\lambda)$-design is a binary code $C$
of length $v$ with codewords of weight $k$ such that any binary vector
of length $v$ and weight $t$ is covered by exactly $\lambda$ codewords.
A $t$-design with $\lambda=1$ is called a Steiner system and also
denoted by $S(v,k,t)$.
The following properties are well known (e.g., \cite{Beth,Blake,Hugh}).

\begin{proposition}\label{blocks}
Given a $t$-$(v,k,\lambda)$-design, every $i$-subset of points ($0\leq
i\leq t$) is contained in exactly $\lambda_i$ blocks, where
$$
\lambda_i=\lambda\frac{\binom{v-i}{t-i}}{\binom{k-i}{t-i}}.
$$
\end{proposition}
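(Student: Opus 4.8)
The plan is to prove the formula by double counting pairs consisting of a block together with a $t$-subset of points contained in it, restricted to those blocks that already contain a fixed $i$-subset. First I would fix an arbitrary $i$-subset $I \subseteq S$ with $|I| = i$, and let $\lambda_i$ denote the number of blocks containing $I$; the initial task is to see that this number is well defined, i.e.\ depends only on $i$ and not on the particular choice of $I$, which will drop out of the counting identity itself once it is established. I would then count, in two ways, the set
\[
\mathcal{P} \;=\; \{\, (T,B) \;:\; I \subseteq T \subseteq S,\ |T| = t,\ T \subseteq B \in \BB \,\}.
\]

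Counting $\mathcal{P}$ by first choosing $T$: a $t$-subset $T$ with $I \subseteq T$ is obtained by adjoining $t - i$ further points chosen from the remaining $v - i$ points, so there are $\binom{v-i}{t-i}$ such sets $T$, and by the defining property of a $t$-$(v,k,\lambda)$-design each is contained in exactly $\lambda$ blocks. Hence $|\mathcal{P}| = \lambda \binom{v-i}{t-i}$. Counting $\mathcal{P}$ by first choosing $B$: every block $B$ contributing to $\mathcal{P}$ must contain $I$ (since $I \subseteq T \subseteq B$), and conversely if $B \supseteq I$ then the sets $T$ with $I \subseteq T \subseteq B$ and $|T| = t$ are obtained by adjoining $t - i$ of the remaining $k - i$ points of $B$, giving $\binom{k-i}{t-i}$ choices. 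Therefore $|\mathcal{P}| = (\text{number of blocks containing } I)\cdot \binom{k-i}{t-i}$. Equating the two expressions and solving gives that the number of blocks through $I$ equals $\lambda \binom{v-i}{t-i} / \binom{k-i}{t-i}$, which in particular is independent of the choice of $I$ and equals the claimed $\lambda_i$.

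There is essentially no serious obstacle here; the only points needing a little care are bookkeeping ones. One must check that $\binom{k-i}{t-i} \neq 0$, which holds because $0 \le i \le t \le k$, so the division is legitimate. One should also note the degenerate cases $i = 0$ (where the formula reduces to the classical count of the total number of blocks) and $i = t$ (where it returns $\lambda_t = \lambda$, consistent with the definition). Finally, since the right-hand side is manifestly independent of $I$, the a priori possibility that different $i$-subsets lie in different numbers of blocks is ruled out by the argument itself, so no separate regularity hypothesis is required. An immediate induction from $i = t$ downward gives an alternative route, but the single double-counting identity above is cleaner and self-contained.
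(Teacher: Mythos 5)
Your double-counting argument is correct and complete: counting pairs $(T,B)$ with $I\subseteq T\subseteq B$, $|T|=t$, in two ways immediately yields $\lambda_i=\lambda\binom{v-i}{t-i}/\binom{k-i}{t-i}$, and the independence of the choice of $I$ falls out as you say. The paper itself gives no proof of Proposition \ref{blocks} (it is quoted as a well-known fact from the design-theory literature), and your argument is precisely the standard one found in those references, so there is nothing further to reconcile.
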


\begin{corollary}\label{cordissenys}
Given a $t$-$(v,k,\lambda)$-design $\DD$:
\begin{itemize}
\item[(i)] $\DD$ is an
$i$-design, for all $i\leq t$.
\item[(ii)] $\lambda=\lambda_t$.
\item[(iii)] The
number of blocks of $\DD$ is $b=\lambda_0$.
\item[(iv)] Each point
is contained in the same number of blocks, namely
$r=\lambda_1=bk/v$ ($r$ is called the {\em replication number}).
\end{itemize}
\end{corollary}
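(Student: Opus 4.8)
The plan is to prove this by a standard double-counting argument, counting incident pairs consisting of a $t$-subset and a block that contains it, but restricted to those $t$-subsets that contain a fixed $i$-subset.

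First I would fix an arbitrary $i$-subset $I \subseteq S$ and let $\mu(I)$ denote the number of blocks $B \in \BB$ with $I \subseteq B$; the goal is to show $\mu(I) = \lambda \binom{v-i}{t-i}/\binom{k-i}{t-i}$, which in particular does not depend on the choice of $I$. To this end I would count, in two ways, the cardinality of the set
\[
\mathcal{P}_I = \{\, (T,B) : I \subseteq T \subseteq S,\ |T| = t,\ B \in \BB,\ T \subseteq B \,\}.
\]
Summing first over $T$: there are exactly $\binom{v-i}{t-i}$ choices of a $t$-subset $T$ with $I \subseteq T$ (choose the remaining $t-i$ points from $S \setminus I$), and by the defining property of a $t$-$(v,k,\lambda)$-design each such $T$ lies in exactly $\lambda$ blocks, so $|\mathcal{P}_I| = \lambda \binom{v-i}{t-i}$.

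Next I would sum over $B$ instead: a block $B$ contributes to $\mathcal{P}_I$ only if $I \subseteq B$, and for each such block the number of $t$-subsets $T$ with $I \subseteq T \subseteq B$ is $\binom{k-i}{t-i}$, since $|B| = k$ and one must pick the remaining $t-i$ points from the $k-i$ points of $B \setminus I$ (here the hypotheses $i \le t \le k$ guarantee these binomial coefficients are the intended positive integers). Hence $|\mathcal{P}_I| = \mu(I)\binom{k-i}{t-i}$. Equating the two counts yields $\mu(I)\binom{k-i}{t-i} = \lambda\binom{v-i}{t-i}$, so $\mu(I) = \lambda\binom{v-i}{t-i}/\binom{k-i}{t-i} = \lambda_i$, independent of $I$; an easy induction downward from $t$, or simply the observation that Proposition~\ref{blocks} for $i$ subsumes the design property at level $i$, then gives the nested structure if desired.

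There is no real obstacle here; the only point requiring a moment of care is verifying that the second count is genuinely independent of which block $B$ (and which subset $I$) one takes — this is immediate because every block has exactly $k$ points, so the combinatorial count $\binom{k-i}{t-i}$ is uniform. One should also note in passing that the right-hand side is forced to be an integer, which is a well-known necessary divisibility condition for the existence of the design but plays no role in the proof itself.
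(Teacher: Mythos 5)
Your double-counting argument is the standard proof of Proposition~\ref{blocks}, and the paper itself gives no proof of this corollary: it is presented as an immediate, well-known consequence of that proposition, so in substance you are following the same route (you simply reprove the proposition along the way). One omission to flag: your write-up stops at $\mu(I)=\lambda_i$, which is item (i), whereas the statement also asserts (ii) $\lambda_t=\lambda$, (iii) $b=\lambda_0$, and in (iv) the identity $r=\lambda_1=bk/v$. Items (ii) and (iii) are just the cases $i=t$ and $i=0$ of your formula (for $i=0$ take $I=\emptyset$, so $\mu(\emptyset)$ counts all blocks), but the equality $\lambda_1=bk/v$ is nowhere addressed and needs one extra line: either count point--block incidences, $vr=\sum_{B\in\BB}|B|=bk$, or note that $\lambda_1/\lambda_0=\left(\tbinom{v-1}{t-1}\big/\tbinom{v}{t}\right)\cdot\left(\tbinom{k}{t}\big/\tbinom{k-1}{t-1}\right)=(t/v)\cdot(k/t)=k/v$. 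With that line added, your proof is complete and matches the intended derivation from Proposition~\ref{blocks}.
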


There is a natural $q$-ary generalization of such $t$-designs (see \cite{AssGM,Del,GvT,Rif1}).
Let $E=\{0,1, \ldots, q-1\}$. A collection $\BB$ of $b$
vectors $x_1, \ldots,x_b$ of length $v$ and weight $k$ over $E$ is
called a $q$-ary $t$-design and denoted $t$-$(v,k,\lambda)_q$, if for every
vector $y$ over $E$ of length $v$ and weight $t$ there are exactly $\lambda$
vectors $x_{i_1}, \ldots, x_{i_\lambda}$ from $\BB$ such that
$d(y,x_{i_j}) = k-t$ for all $j=1,\ldots,\lambda$. If $\lambda=1$,
then we obtain a $q$-ary Steiner system, denoted $S(v,k,t)_q$.

For a code $C$ denote by $C_w$ the set of all codewords of $C$ of weight $w$.
Regularity of a code $C$ implies that the sets $C_w$ determine $t$-designs.

Directly from the definition of completely regular codes (see also \cite{GvT,SZZ}) we have the following

\begin{theorem}\label{designs}
Let $C$ be a $q$-ary completely regular code of length $n$ with minimum distance $d$.

\begin{itemize}
\item[(i)] If $d=2e+1$ then any nonempty set $C_w$ is an
$e$-$(n,w,\lambda_w)_q$-design.
\item[(ii)] If $d=2e+2$ then any nonempty
set $C_w$ is an $(e+1)$-$(n,w,\lambda_w)_q$-design.
\end{itemize}
\end{theorem}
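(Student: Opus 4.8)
The plan is to prove that for every weight $w$ with $C_w\neq\emptyset$ and every vector $y$ of weight $t$, the number $|\{\bc\in C_w: d(y,\bc)=w-t\}|$ is independent of $y$, where $t=e$ in case (i) and $t=e+1$ in case (ii). Since a $t$-design is automatically an $s$-design for every $s\le t$ by Proposition~\ref{blocks}, it is cleaner to prove, by induction on $s$, the statement $P(s)$: \emph{for every weight $w$ and every vector $y$ of weight $s$, the number of codewords of $C_w$ covering $y$ (equivalently, lying at distance $w-s$ from $y$) equals a constant $\lambda_w$}, where $s$ runs over $0\le s\le e$ in case (i) and over $0\le s\le e+1$ in case (ii); the base case $P(0)$ is trivial, and since any weight-$s$ restriction of a codeword in $C_w$ is covered by it, $\lambda_w>0$ automatically when $C_w\neq\emptyset$.

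Two facts feed the induction. First, if $\wt(y)=s$ with $s$ in the stated range, then $y\in C(s)$: for a nonzero codeword $\bc$ one has $d(y,\bc)\ge\wt(\bc)-\wt(y)\ge d-s\ge s$ (strict in case (i)), whereas $d(y,\zero)=s$, so $d(y,C)=s$; a short Hamming-distance computation also shows that $\bc\in C_w$ covers $y$ iff $d(y,\bc)=w-s$, that no codeword at distance $w-s$ from $y$ has weight exceeding $w$, and that such a codeword of weight $w'<w$ must disagree with $y$ in at least one coordinate of $\supp(y)$. Second, for $\bv\in C(\ell)$ the distance distribution $B_j(\bv):=|\{\bc\in C:d(\bv,\bc)=j\}|$ depends only on $\ell$. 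I would prove this by induction on $\ell$: counting in two ways the codewords at distance $j$ from the $(q-1)n$ Hamming-neighbours $\bv'$ of $\bv$ yields
\[
\sum_{\bv'\sim\bv}B_j(\bv')=(j+1)\,B_{j+1}(\bv)+(q-2)j\,B_j(\bv)+(q-1)(n-j+1)\,B_{j-1}(\bv),
\]
a relation involving only $B_{\cdot}(\bv)$; on the other hand, by Definition~\ref{de:1.1} the same sum equals $c_\ell B^{(\ell-1)}_j+a_\ell B^{(\ell)}_j+b_\ell B^{(\ell+1)}_j$, with $B^{(m)}_{\cdot}$ the (inductively known) common value on $C(m)$ for $m\le\ell$, and since $b_\ell\neq0$ for $\ell<\rho$ by Lemma~\ref{graph} this determines $B^{(\ell+1)}_{\cdot}$; the base case $\ell=0$ is the weight distribution of $C$, which is constant on $C$ because $C$ is distance-invariant (automatic for the linear codes considered here).

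For the inductive step of $P$, assume $P(s-1)$, fix a vector $y$ of weight $s$, and determine $\lambda_w:=|\{\bc\in C_w:\bc\text{ covers }y\}|$ for increasing $w$. By the two facts above,
\[
\lambda_w=B_{w-s}(y)-\#\{\bc\in C:\wt(\bc)<w,\ d(y,\bc)=w-s\},
\]
where $B_{w-s}(y)$ is a constant because $y\in C(s)$. In the subtracted term every $\bc$ disagrees with $y$ on a nonempty subset $S\subseteq\supp(y)$; grouping these codewords by $\wt(\bc)=w'<w$, by $S$, and by the pattern on $S$ of the coordinates where $\bc$ vanishes versus takes a nonzero value $\neq y$, and applying inclusion--exclusion over $S$, one rewrites the subtracted term as a $\Z$-linear combination of quantities of the form ``the number of weight-$w'$ codewords covering a fixed vector $v$'', with $\wt(v)\le s$ and $w'<w$. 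Those with $\wt(v)<s$ are constants by $P(s-1)$; those with $\wt(v)=s$ are constants because $P(s)$ has already been established for the smaller weight $w'$ (the computation proceeds over increasing $w$). Hence $\lambda_w$ is constant; this proves $P(s)$ and completes the induction, and (i), (ii) are the cases $s=e$ and $s=e+1$ respectively, with $\lambda_w$ the common covering number.

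The hard part will be the bookkeeping in the last paragraph for general $q$: relating ``$d(y,\bc)=w-s$'' precisely to how $\bc$ sits over $\supp(y)$ — how many of those coordinates it sends to $0$ and how many to another nonzero value — and then disentangling the covering term by inclusion--exclusion, which is exactly where one needs the inductive hypothesis for \emph{all} shells $C_{w'}$ rather than only for $C_w$. For $q=2$ everything collapses to the classical statement that the blocks of a completely regular binary code cover $t$-subsets evenly, so it may be worth treating the binary case first. A minor point is the base case $\ell=0$ of the distance distribution claim, immediate for linear $C$ and in general equivalent to completely regular codes being distance-invariant.
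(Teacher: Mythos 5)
The paper itself offers no proof of Theorem~\ref{designs} (it is quoted ``directly from the definition'' with references to \cite{GvT,SZZ}), so you are reconstructing a classical result. Your overall strategy is the standard one and most of it is sound: the reduction of (i), (ii) to the statement $P(s)$, the observation that a vector $y$ of weight $s\leq e$ (resp.\ $s\leq e+1$) lies in $C(s)$, the fact that a codeword at distance $w-s$ from $y$ has weight at most $w$ and has weight exactly $w$ precisely when it covers $y$, the subtraction formula for $\lambda_w$, and the double induction on $s$ and $w$ together with the inclusion--exclusion bookkeeping (which does go through, since the constraints ``$\bc_i=0$'' and ``$\bc_i\notin\{0,y_i\}$'' can be expanded over the $q-1$ nonzero values into covering numbers of vectors of weight at most $s$) are all workable.

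The genuine gap is in your proof of the second ``fact'', that $B_j(\bv)$ depends only on $\ell=d(\bv,C)$. Your induction on $\ell$ is circular: writing $\sum_{\bv'\sim\bv}B_j(\bv')=c_\ell B^{(\ell-1)}_j+a_\ell B^{(\ell)}_j+b_\ell B^{(\ell+1)}_j$ already presupposes that $B_j$ takes a common value on the neighbours of $\bv$ lying in $C(\ell+1)$, which is exactly what that inductive step is supposed to establish; what the two-way count actually gives is only that the \emph{sum} of $B_j(\bv')$ over the $b_\ell$ neighbours of $\bv$ in $C(\ell+1)$ is independent of $\bv\in C(\ell)$, and constancy of an average does not pin down the individual values. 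In addition, your base case $\ell=0$ assumes distance invariance, which for a general (possibly nonlinear) completely regular code --- and the theorem is stated for arbitrary $q$-ary completely regular codes --- is not automatic but is itself part of what is being proved. Both defects disappear if you run the induction on $j$ instead of on $\ell$: $B_0$ is trivially constant on every cell $C(m)$; assuming $B_{j-1}$ and $B_j$ are constant on \emph{all} cells, the identification $\sum_{\bv'\sim\bv}B_j(\bv')=c_\ell B^{(\ell-1)}_j+a_\ell B^{(\ell)}_j+b_\ell B^{(\ell+1)}_j$ is now legitimate by Definition~\ref{de:1.1}, and your three-term relation determines $(j+1)B_{j+1}(\bv)$ as a function of $\ell$ alone (no appeal to $b_\ell\neq0$ is needed, and distance invariance comes out as the case $\ell=0$ rather than being assumed). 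Alternatively, you may simply invoke the equivalence, proved in \cite{Neum}, of the intersection-number definition with Delsarte's definition that the outer distribution depends only on $d(\bv,C)$. With that repair the remaining work is only the bookkeeping you already flagged as the hard part.
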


For a code $C$, we denote by $s+1$ the number of nonzero terms in the dual
distance distribution of $C$, obtained by the MacWilliams transform. The parameter
$s$ was called {\em external distance} by Delsarte \cite{Del}, and is equal
to the number of nonzero weights of $C^\perp$ if $C$ is linear. The following properties show the importance of this parameter.

\begin{theorem}\label{params}
If $C$ is any code with covering radius $\rho$ and external distance $s$, then
\begin{itemize}
\item[(i)] {\cite{Del}} $\rho \leq s$.
\item[(ii)] {\cite{Del}} A code $C$ is perfect ($e=\rho$) if and only if $e=s$.
\item[(iii)] {\cite{GvT}} A code $C$ is quasi-perfect uniformly packed if and only if $s=e+1$.
\item[(iv)] {\cite{Sole}} If $C$ is completely regular, then $\rho=s$.
\item[(v)] {\cite{Del}} If $d\geq 2 s -1$, then $C$ is completely regular.
\item[(vi)] {\cite{BCN}} If $C$ has only even weights and $d\geq 2s -2$, then $C$ is completely regular.
\end{itemize}
\end{theorem}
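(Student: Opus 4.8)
Since parts (i)--(iv) are the quoted results of Delsarte \cite{Del}, Goethals--van Tilborg \cite{GvT} and Sol\'e \cite{Sole}, the plan is to recall the standard arguments for those four and then to \emph{derive} (v) and (vi) from (i)--(iii) together with the Proposition above stating that a uniformly packed code is completely regular.

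For (i) I would take Delsarte's covering-radius bound from \cite{Del}: in the Hamming scheme $H(n,q)$, for an arbitrary $\bv\in\F_q^n$ with outer distribution $a_i=|\{\bc\in C:\ d(\bv,\bc)=i\}|$, his inequalities $\sum_i a_iK_k(i)\ge0$ (with equality whenever $B_k=0$), combined with an annihilator-polynomial argument using a polynomial of degree $s$ attached to the $s$ indices $k\ge1$ with $B_k\ne0$, produce a codeword within distance $s$ of $\bv$; hence $\rho\le s$. For (ii) I would use that $e\le\rho$ always and that $C$ is perfect precisely when $e=\rho$ (pairwise-disjoint radius-$e$ balls that also cover $\F_q^n$ form a partition). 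If $C$ is perfect, Lloyd's theorem gives exactly $e$ nonzero dual distances, so $s=e$; conversely, if $s=e$ then (i) yields $e\le\rho\le s=e$, so $\rho=e$ and $C$ is perfect.

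For (iii) I would quote the Goethals--van Tilborg analysis \cite{GvT}: the defining identity of a quasi-perfect uniformly packed code (Definition~\ref{def:2.5}) is equivalent to a degree-$(e+1)$ polynomial relation holding for all outer distributions, which is in turn equivalent to $s=e+1$. For (iv) I would look at the outer-distribution matrix $\mathbf M$ whose rows are the outer distributions of all $\bv\in\F_q^n$. Since $\mathbf M\cdot\bigl(K_k(i)\bigr)^{T}$ has its $k$-th column identically zero exactly when $B_k=0$, and $\bigl(K_k(i)\bigr)$ is invertible, we get $\rank\mathbf M\le s+1$; and when $C$ is completely regular the row attached to the subconstituent $C(l)$ starts with $l$ zeros followed by a positive entry, so the $\rho+1$ distinct rows are linearly independent and $\rank\mathbf M=\rho+1$, giving $\rho\le s$ once more. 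The reverse inequality is the real content of Sol\'e's argument \cite{Sole}: the partition $\{C(0),\dots,C(\rho)\}$ is equitable, its tridiagonal quotient matrix has exactly $\rho+1$ distinct eigenvalues, and these turn out to be precisely the $s+1$ eigenvalues of the scheme indexed by $\{0\}\cup\{k:B_k\ne0\}$, forcing $\rho=s$.

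It remains to derive (v) and (vi). If $d\ge2s-1$ then $e=\lfloor(d-1)/2\rfloor\ge s-1$ while $e\le\rho\le s$ by (i), so $e\in\{s-1,s\}$; if $e=s$ then $\rho=e$, so $C$ is perfect by (ii) and hence completely regular; if $e=s-1$ then $\rho\in\{e,e+1\}$, and either $\rho=e$ (again perfect) or $\rho=e+1=s$, whence $s=e+1$ and (iii) makes $C$ a quasi-perfect uniformly packed code, completely regular by the Proposition above. This is (v). For (vi), $d\ge2s-2$ gives $e\ge s-2$; the cases $e=s$ and $e=s-1$ are handled exactly as before, and in the single remaining case $e=s-2$ the bound forces $d=2s-2=2e+2$, and I would invoke the even-weight refinement of the uniformly packed theory of \cite{BCN} to conclude complete regularity. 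The genuinely deep inputs are (i) and the lower-bound half of (iv); among the parts I would prove rather than quote, the delicate point is the boundary case $e=s-2$ in (vi), which is exactly where the even-weight hypothesis becomes indispensable.
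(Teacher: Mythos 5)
The paper itself offers no proof of this theorem: it is stated as a compilation of known results, each item attributed to \cite{Del}, \cite{GvT}, \cite{Sole} or \cite{BCN}, and used as a black box later on. Your proposal is therefore not in conflict with the paper, but it takes a partly different and more self-contained route: you quote (i)--(iv) (with essentially correct sketches of Delsarte's annihilator argument, the Lloyd/sphere-packing characterization, the Goethals--van Tilborg equivalence, and Sol\'e's equitable-partition argument), and then you \emph{derive} (v), and all but one case of (vi), from (i)--(iii) together with the paper's proposition that uniformly packed codes are completely regular. That derivation is sound: $d\geq 2s-1$ forces $e\in\{s-1,s\}$, and the case analysis via (ii) and (iii) is valid (note the subcase $e=s-1$, $\rho=e$ is actually vacuous by (ii), which does no harm); the only unproved ingredients are the classical fact that a perfect code is completely regular, which you assert, and the genuine boundary case $e=s-2$ of (vi), which you cite from \cite{BCN} exactly as the paper does. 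What your approach buys is the observation that (v) is not an independent theorem but a formal consequence of (i)--(iii) plus the uniformly packed machinery already present in Section 2; what the paper's approach buys is brevity, since all six items are standard and none of their proofs is needed elsewhere in the article. No genuine gap, beyond the deliberate reliance on the cited sources for (i)--(iv) and for the even-weight case of (vi).
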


Given a code $C$, we define the {\em extended code} $C^*$ by adding an extra coordinate to each codeword of $C$ such that the sum of the coordinates of the extended vector is zero.

\begin{proposition}\label{estesos}
If a binary extended code $C^*$, of length $n+1$, is a completely regular code with minimum distance $d^*=2e+2\geq 4$, then
for all odd $w$
\begin{eqnarray*}\label{recursio}
|C^*_{w+1}|(w+1) &=& (n+1)|C_w|\;\;\mbox{ and }\\
(n-w)|C_w| &=& (w+1)|C_{w+1}|.
\end{eqnarray*}
\end{proposition}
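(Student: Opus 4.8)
The plan is to combine the design structure of the subconstituents of $C^*$, furnished by Theorem~\ref{designs}, with the elementary bookkeeping of the parity extension, concentrating attention on the single adjoined coordinate.

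First I would record how weights behave under the extension map $\bc\mapsto\bc^*$ that appends the overall parity bit. This map is a bijection from $C$ onto $C^*$, and a codeword $\bc\in C$ of weight $w$ is sent to a codeword of weight $w$ when $w$ is even (parity bit $0$) and to a codeword of weight $w+1$ when $w$ is odd (parity bit $1$). Hence, for odd $w$, the set $C^*_{w+1}$ is the disjoint union of the images of $C_{w+1}$ (the codewords with a $0$ in the last coordinate) and of $C_w$ (the codewords with a $1$ in the last coordinate); in particular
\[
|C^*_{w+1}| \;=\; |C_{w+1}| + |C_w| .
\]

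Next I would bring in complete regularity. Since $C^*$ is binary, completely regular and has $d^*=2e+2\geq 4$, Theorem~\ref{designs}(ii) shows that every nonempty $C^*_{w+1}$ is an $(e+1)$-$(n+1,\,w+1,\,\lambda_{w+1})$-design; viewing its codewords as blocks on the $n+1$ coordinates, Corollary~\ref{cordissenys}(iv) then says that each coordinate occurs as a $1$ in exactly $r=|C^*_{w+1}|(w+1)/(n+1)$ of them. Applying this to the adjoined coordinate and using the previous paragraph, the codewords of $C^*_{w+1}$ that have a $1$ in that coordinate are precisely the images of the weight-$w$ codewords of $C$, so their number is $|C_w|$; equating the two counts gives $(n+1)|C_w|=(w+1)|C^*_{w+1}|$, which is the first claimed identity (when $C^*_{w+1}$ is empty the identity is trivial, since then $C_w$ is empty as well). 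Substituting $|C^*_{w+1}|=|C_{w+1}|+|C_w|$ into this relation and rearranging yields $(n-w)|C_w|=(w+1)|C_{w+1}|$, the second identity. (One may also obtain the second identity directly by counting the codewords of $C^*_{w+1}$ with a $0$ in the last coordinate, which number both $|C_{w+1}|$ and $|C^*_{w+1}|-r$.)

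I do not anticipate a substantive obstacle here: the argument is essentially a replication-number count on a $1$-design. The only points requiring mild care are the bookkeeping of which codewords of $C$ pick up parity bit $1$, and checking that the degenerate situations ($C^*_{w+1}=\emptyset$, or $w=n$ so that $C_{w+1}=\emptyset$) remain consistent with the stated formulas.
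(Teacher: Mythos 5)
Your proposal is correct and follows essentially the same route as the paper: it invokes Theorem~\ref{designs} to view $C^*_{w+1}$ as a design, counts via the replication number at the adjoined coordinate to get $(w+1)|C^*_{w+1}|=(n+1)|C_w|$, and combines this with $|C^*_{w+1}|=|C_{w+1}|+|C_w|$ to obtain the second identity. Your extra attention to the degenerate cases is a minor refinement, not a different argument.
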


\begin{proof}
Let $w$ be odd and assume that $C_w$ is not empty. By Theorem \ref{designs}, the set
$C^*_{w+1}$ of codewords of weight $w+1$ form a $(e+1)$-$(n+1,w+1,\lambda^*_2)$-design which, in particular,
is a $2$-$(n+1,w+1,\lambda^*_2)$-design, by Corollary \ref{cordissenys}. The number of codewords in $C^*_{w+1}$ with nonzero value at position
$n+1$ is $r^*$, the replication number, and clearly $r^*=|C_w|$. Therefore,
\begin{equation}\label{replication}
|C^*_{w+1}|(w+1)=(n+1)r^*.
\end{equation}
Combining (\ref{replication}) with $|C^*_{w+1}|=|C_{w+1}|+|C_{w}|$, the result follows.
\end{proof}

For any vector $x=(x_1,\ldots,x_n)\in \F_q^n$, denote by $\sigma(x)$ the right cyclic shift of $x$, i.e. $\sigma(x)=(x_n,x_1,\ldots,x_{n-1})$. Define recursively $\sigma^i(x)=\sigma(\sigma^{i-1}(x))$, for $i=2,3,\ldots$ and $\sigma^1(x)=\sigma(x)$. For $j<0$, we define $\sigma^j(x)=\sigma^{\ell}(x)$, where $\ell = j \mod n$.

Finally, we will also make use of the following technical lemma.

\begin{lemma}\label{shifts}
Let $x\in \F_q^n$ be a vector of weight $w$. If $\Gcd(n,w)=1$, then $\sigma^i(x)\neq x$, for all $i=1,\ldots,n-1$.
\end{lemma}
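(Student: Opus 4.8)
The plan is to argue by contradiction. Suppose $\sigma^i(x)=x$ for some $i\in\{1,\ldots,n-1\}$; I will show this forces $\Gcd(n,w)>1$. We may assume $w\geq 1$, since $w=0$ gives $\Gcd(n,w)=n$, hence $n=1$ and there is no such $i$. The first step is to replace the shift by $i$ with a shift by $d:=\Gcd(i,n)$. Since $\sigma$ is invertible with $\sigma^n=\mathrm{id}$, and $\sigma^i(x)=x$, every integer power of $\sigma^i$ fixes $x$; writing $d=ai+bn$ by B\'ezout, the map $\sigma^d=(\sigma^i)^a(\sigma^n)^b$ therefore fixes $x$. Thus $\sigma^d(x)=x$, where $1\le d\le i<n$ and $d\mid n$ (equivalently, $i$ and $\Gcd(i,n)$ generate the same cyclic subgroup of $\Z/n\Z$).

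Next I would unwind the definition of $\sigma$: the equality $\sigma^d(x)=x$ says that $x_j=x_{j-d}$ for all $j$, reading indices cyclically modulo $n$. Since $d\mid n$, iterating this shows that $x$ is constant on each of the $d$ residue classes modulo $d$ in $\{1,\ldots,n\}$, and each such class has exactly $m:=n/d$ elements. Consequently $\supp(x)$ is a union of some of these classes, so $w=|\supp(x)|$ is a multiple of $m$. Because $d<n$ we have $m\ge 2$, and because $d\mid n$ we have $m\mid n$; hence $m$ divides $\Gcd(n,w)$, contradicting the hypothesis $\Gcd(n,w)=1$. This completes the argument.

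There is no serious obstacle here: the proof is entirely elementary. The only point that deserves a little care is the reduction in the first step — passing from "$x$ is fixed by $\sigma^i$" to "$x$ is fixed by $\sigma^{\Gcd(i,n)}$" — since it is this that allows one to work with a genuine \emph{period} of $x$ dividing $n$; after that, the coprimality hypothesis closes the argument immediately via the divisibility $m\mid\Gcd(n,w)$.
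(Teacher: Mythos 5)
Your proof is correct and follows essentially the same idea as the paper's: nontrivial shift-invariance makes $x$ periodic, forcing $w$ to be a multiple of a divisor of $n$ greater than $1$, contradicting $\Gcd(n,w)=1$. In fact your write-up is slightly more careful than the paper's, since the explicit reduction from $\sigma^i$ to $\sigma^{\Gcd(i,n)}$ justifies working with a period dividing $n$ for arbitrary $i$ (the paper simply asserts that $i$ divides $n$ and treats $i=1$ separately), and your divisibility bookkeeping via $m=n/d$ handles the degenerate cases uniformly.
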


\begin{proof}
Assume that $\sigma^i(x)=x$ for some $i=2,\ldots,n-1$. Then, $i$ divides $n$ and $x$ has the form:
$$
x=\underbrace{(x',x',\ldots,x')}_{n/i},
$$
where $x'$ is a vector of length $i$. Thus, $w$ is a multiple of $i$. As a consequence, $i$ is a common divisor of $n$ and $w$. For the case, $\sigma(x)=x$, note that $x$ should be either the all-one or the all-zero vector.
\end{proof}

\section{Infinite families of CR codes}

The next construction is new, although the dual codes of the resulting family of $q$-ary completely regular codes are known as the family SU2 in \cite{Cald}. In the current paper, we also study when these codes are completely transitive and when the extended codes are completely regular.

\subsection*{Construction I}

Let $H$ be the parity check matrix of a $q$-ary cyclic Hamming code of length $n=(q^k-1)/(q-1)$, (hence $\Gcd(n,q-1)=1$). Thus, the simplex code generated by $H$ is also a cyclic code. Denote by $\br_1,\ldots,\br_k$ the rows of $H$. For any $c\in \{2,\ldots,n\}$, consider the code $C$ with parity check matrix
\begin{equation}\label{code33}
\left[
\begin{array}{ccccc}
H\;& \;H \;&\ldots\,&\,H\,\\
H_1\;& \;H_2 \;&\ldots\,&\,H_c
\end{array}
\right],
\end{equation}
where $H_i$ is the matrix $H$ after cyclically shifting $i$ times its columns to the right. In other words, the rows of $H_i$ are $\sigma^i(\br_1),\ldots,\sigma^i(\br_k)$. Note that, for $c=1$, we have
\[
\left[
\begin{array}{c}
H\;\\
H_1\;
\end{array}
\right],
\]
which generates the simplex code as $H$. Therefore, in this case, $C$ is a Hamming code.

\begin{proposition}\label{pesos}
The code $C^\perp$ has nonzero weights
$$
w_1=cq^{k-1}\;\;\mbox{ and }\;\;w_2=(c-1)q^{k-1}.
$$
\end{proposition}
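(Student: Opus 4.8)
The plan is to identify $C^\perp$ explicitly as the code generated by the rows of the parity check matrix \eqref{code33}, and then compute weights of arbitrary nonzero codewords by reducing everything to weights in the $q$-ary simplex code generated by $H$. Recall that every nonzero codeword of the simplex code has constant weight $q^{k-1}$; this is the fact I will lean on throughout.

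First I would write a general element of $C^\perp$ as a combination $\sum_{j=1}^k \alpha_j \br_j$ applied to the ``top band'' together with $\sum_{j=1}^k \beta_j \sigma^{\,\cdot}(\br_j)$ applied to the ``bottom band'', i.e. using coefficient vectors $\ba=(\alpha_1,\dots,\alpha_k)$ and $\bb=(\beta_1,\dots,\beta_k)$ in $\F_q^k$. On the $i$-th length-$n$ block (for $i=1,\dots,c$) the resulting word is the simplex-code vector corresponding to $\ba$ \emph{added to} the cyclic shift $\sigma^i$ of the simplex-code vector corresponding to $\bb$. Since the simplex code generated by $H$ is cyclic, a cyclic shift of a simplex codeword is again a simplex codeword; so each length-$n$ block of our general codeword is itself a simplex codeword, namely the one whose coefficient vector is $\ba$ plus a ``twisted'' copy of $\bb$. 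Consequently each block has weight $0$ or $q^{k-1}$, and the total weight of the codeword is $q^{k-1}$ times the number of blocks on which it is nonzero. It therefore suffices to count, as $i$ ranges over $1,\dots,c$, for how many $i$ the $i$-th block vanishes.

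The $i$-th block vanishes exactly when $\ba$ equals the negative of the shifted version of $\bb$'s simplex codeword, read back through $H$; because $H$ has full row rank $k$, this is a single linear condition on $(\ba,\bb)$ for each fixed $i$, and for two distinct values $i\neq i'$ the two conditions are incompatible unless the shifted simplex codewords coincide, which by cyclicity and the constant-weight property happens only in degenerate cases. The upshot is that for any nonzero $(\ba,\bb)$ at most one block can vanish: if $\bb=\zero$ then the word is a nonzero constant simplex codeword repeated $c$ times (weight $cq^{k-1}$), and if $\bb\neq\zero$ then exactly one of the $c$ blocks can be made to vanish by an appropriate choice of $\ba$, giving weight $(c-1)q^{k-1}$, while all other choices of $\ba$ give weight $cq^{k-1}$. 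Here the coprimality $\Gcd(n,q-1)=1$, via Lemma~\ref{shifts}, is what guarantees the relevant shifts are genuinely distinct so that no nonzero codeword can have two blocks simultaneously zero. This yields exactly the two claimed nonzero weights $w_1=cq^{k-1}$ and $w_2=(c-1)q^{k-1}$.

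The main obstacle I anticipate is the bookkeeping in the previous step: one must argue carefully that the linear conditions ``block $i$ vanishes'' for different $i$ cannot hold simultaneously for a nonzero $(\ba,\bb)$, and this is precisely where the cyclicity of the simplex code and the coprimality hypothesis enter. Making this rigorous amounts to showing that if a nonzero simplex codeword equals one of its own cyclic shifts $\sigma^{i-i'}$ with $1\le |i-i'|\le c-1<n$, then it must be the all-zero word --- which is exactly the content of Lemma~\ref{shifts} applied to a vector of weight $q^{k-1}$ with $\Gcd(n,q^{k-1})=1$ (the latter following from $\Gcd(n,q-1)=1$ and $n=(q^k-1)/(q-1)$). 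Once that collision-freeness is in place, the weight computation is immediate.
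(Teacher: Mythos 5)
Your proposal is correct and follows essentially the same route as the paper's proof: each length-$n$ block of a codeword of $C^\perp$ is a codeword of the cyclic simplex code (hence of weight $0$ or $q^{k-1}$), and Lemma~\ref{shifts} applied to a weight-$q^{k-1}$ vector, using $\Gcd(n,q^{k-1})=1$ and the fact that two distinct blocks differ by a shift of at most $c-1<n$ positions, shows that at most one block can vanish. One minor quibble: $\Gcd(n,q^{k-1})=1$ follows from $n=1+q+\cdots+q^{k-1}\equiv 1 \pmod{p}$ for every prime $p$ dividing $q$, not from $\Gcd(n,q-1)=1$ as your final parenthetical suggests.
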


\begin{proof}
Let $x=(x_1,\ldots,x_c)\in C^\perp$ be a nonzero codeword such that each $x_i$ is an vector of length $n$ generated by
\[
\left[
\begin{array}{c}
H\;\\
H_i\;
\end{array}
\right].
\]
Since $H$ and $H_i$ generate the same simplex code, $x_i$ has weight $0$ or $q^{k-1}$. Assume that $x_i$ is the zero vector. Then, $x_i$ is generated by a linear combination of the rows of $H$ (giving some vector $v$), together with a linear combination of the rows of $H_i$ (giving the vector $-v$). The same linear combination of the rows of $H_j$ gives the vector $u=\sigma^{j-i}(-v)$. Since the weight of $-v$ is $q^{k-1}$ and  $\Gcd((q^k-1)/(q-1),q^{k-1})=1$, we have, by Lemma \ref{shifts}, $u\neq -v$ and hence $x_j$ is not the zero vector.

The conclusion is that $x$ has weight $cq^{k-1}$ or $(c-1)q^{k-1}$.
\end{proof}

\begin{remark}
In the proof of Proposition \ref{pesos}, the number of ways to get $x_i$ equal to the zero vector (being $x$ a nonzero codeword) is equal to the number of nonzero vectors generated by $H$. Therefore, $C^\perp$ has $c(q^k-1)$ codewords of weight $w_2=(c-1)q^{k-1}$ and $q^{2k}-c(q^k-1)-1$ codewords of weight $w_1=cq^{k-1}$. By using this weight distribution of $C^\perp$ and the MacWilliams transform \cite{MacW}, it is possible to compute $|C_3|$, the number of codewords in $C$ of weight $3$. Here we use a combinatorial argument to compute $|C_3|$.
\end{remark}

Let $B_1,\ldots,B_c$ be the $n$-sets, which we call {\em blocks}, of coordinate positions corresponding to $H_1,\ldots,H_c$, that is $B_j=\{(j-1)n+1,(j-1)n+2,\ldots,jn\}$, for $j=1,\ldots,c$.

\begin{proposition}\label{pesminim}
The number $|C_3|$ of codewords in $C$ of weight $3$ is:
$$
(q-1)^2\frac{c\binom{n}{2}}{3} + (q-1)n\binom{c}{3}=\frac{(q-1)cn}{6}\left[(q-1)(n-1)+(c-1)(c-2)\right],
$$
provided that $\binom{c}{3}=0$, for $c\in\{1,2\}$.
\end{proposition}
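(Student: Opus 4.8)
The plan is to sort the weight-$3$ codewords of $C$ by the way their three nonzero coordinates are distributed among the blocks $B_1,\dots,B_c$: all three in a single block (type (A)), two in one block and one in another (type (B)), or one in each of three distinct blocks (type (C)). The bookkeeping becomes transparent through a field model of $H$. Since $\Gcd(n,q-1)=1$, an element $\gamma$ of order $n$ in $\F_{q^k}^{*}$ produces a transversal $1,\gamma,\dots,\gamma^{n-1}$ of $\F_q^{*}$ in $\F_{q^k}^{*}$, so after identifying $\F_{q^k}$ with $\F_q^{k}$ we may take the $\ell$-th column of $H$ to be $c_\ell=\gamma^{\ell-1}$. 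I will use two facts: distinct columns of $H$ are $\F_q$-linearly independent (so $C$ has minimum distance $3$, as a short check on weights $1$ and $2$ confirms); and the $\ell$-th column of $H_j$ equals $c_{\ell-j}=\gamma^{-j}c_\ell$, so applying $H_j$ to a vector $u\in\F_q^{n}$ gives $\gamma^{-j}(Hu^{\top})$ under the above identification, whence $H_j u^{\top}=\zero\iff Hu^{\top}=\zero$.

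A codeword of type (A) supported in $B_j$ restricts to a weight-$3$ vector $u\in\F_q^{n}$ with $Hu^{\top}=\zero$, that is, a weight-$3$ codeword of the cyclic Hamming code; the remaining parity checks $H_j u^{\top}=\zero$ then hold automatically by the scaling identity. Conversely, each weight-$3$ Hamming codeword lifts to such a codeword of $C$ in exactly $c$ ways, one per block. Counting weight-$3$ codewords of the $q$-ary Hamming code is a short exercise: each ordered choice of two distinct nonzero entries determines the third (position and value) uniquely, and every weight-$3$ codeword arises from the six ordered pairs inside its support, so there are $n(n-1)(q-1)^{2}/6=\binom{n}{2}(q-1)^{2}/3$ of them. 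Hence type (A) contributes $(q-1)^{2}c\binom{n}{2}/3$.

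Type (B) contributes nothing. Suppose such a codeword is supported at $(j,\ell_1),(j,\ell_2),(j',\ell_3)$ with $j\neq j'$, $\ell_1\neq\ell_2$, and nonzero values $a_1,a_2,a_3$. The first block of parity checks reads $a_1c_{\ell_1}+a_2c_{\ell_2}+a_3c_{\ell_3}=\zero$; if $a_3=0$ this forces $a_1=a_2=0$ by linear independence of $c_{\ell_1},c_{\ell_2}$, a contradiction, so $a_3\neq 0$. Multiplying that relation by $\gamma^{-j}$ and subtracting the second block of parity checks, $a_1\gamma^{-j}c_{\ell_1}+a_2\gamma^{-j}c_{\ell_2}+a_3\gamma^{-j'}c_{\ell_3}=\zero$, leaves $a_3(\gamma^{-j'}-\gamma^{-j})c_{\ell_3}=\zero$, which is impossible since $a_3\neq 0$, $c_{\ell_3}\neq\zero$, and $\gamma^{-j'}\neq\gamma^{-j}$ because $0<|j-j'|<n$.

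Finally, for type (C) fix three distinct blocks $j_1,j_2,j_3$, and for a codeword with nonzero value $a_i$ at $(j_i,\ell_i)$ put $b_i=a_ic_{\ell_i}\in\F_{q^k}^{*}$. The two blocks of parity checks become $b_1+b_2+b_3=0$ and $\gamma^{-j_1}b_1+\gamma^{-j_2}b_2+\gamma^{-j_3}b_3=0$; as the $\gamma^{-j_i}$ are pairwise distinct, this $2\times 3$ system over $\F_{q^k}$ has a $1$-dimensional solution space all of whose nonzero vectors have every coordinate nonzero, giving $q^{k}-1$ admissible triples $(b_1,b_2,b_3)$. Each $b_i\in\F_{q^k}^{*}$ corresponds bijectively to a pair $(\ell_i,a_i)$ with $\ell_i\in\{1,\dots,n\}$, $a_i\in\F_q^{*}$, so we obtain $q^{k}-1=(q-1)n$ codewords of $C$ for each of the $\binom{c}{3}$ unordered triples of blocks, and type (C) contributes $(q-1)n\binom{c}{3}$; for $c\in\{1,2\}$ this type (and for $c=1$ also type (B)) is vacuous, consistent with the stated convention. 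Adding the three contributions and expanding $\binom{n}{2}$ and $\binom{c}{3}$ yields the displayed closed form. The main obstacle I anticipate is type (B): recognizing that a $2+1$ block pattern can never yield a codeword is exactly what collapses the answer to two terms, and the scaling identity $c_{\ell-j}=\gamma^{-j}c_\ell$ is the lever that makes both this vanishing and the type-(C) count immediate.
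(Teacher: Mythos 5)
Your proof is correct, and it reaches the count by a genuinely different route for the key step. The block-pattern decomposition (all three coordinates in one block; a $2{+}1$ split; three distinct blocks) is the same as in the paper, but the paper handles the three-distinct-blocks case combinatorially: it fixes a weight-one vector $v$ in one block, proves uniqueness of a covering weight-$3$ codeword by producing a weight-$2$ codeword from two hypothetical ones, and proves existence by a global counting argument comparing $(q-1)^2n^2$ with $|C(2)|$ — an argument that invokes the covering radius $\rho=2$ (justified via the external distance from Proposition \ref{pesos}) and the design structure of $C_3$. You instead exploit the explicit field model of the cyclic Hamming code: with columns $c_\ell=\gamma^{\ell-1}$ for $\gamma$ of order $n$ (a transversal of $\F_q^*$ in $\F_{q^k}^*$ precisely because $\Gcd(n,q-1)=1$), the identity $\mathrm{col}_\ell(H_j)=\gamma^{-j}c_\ell$ turns both the impossibility of the $2{+}1$ pattern and the count $q^k-1=(q-1)n$ per triple of blocks into elementary linear algebra over $\F_{q^k}$ (a rank-$2$ system whose one-dimensional kernel has no zero coordinate). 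What your approach buys is a self-contained, existence-and-uniqueness-free argument that does not need $\rho=2$, $|C(2)|$, or Theorem \ref{designs} at this stage — which is arguably cleaner, since the paper's corollary later uses $|C_3|$ as input to the intersection array; what the paper's approach buys is independence from any particular representation of the parity check matrix. If you want to be scrupulous about your ``we may take'' normalization, add one sentence noting that replacing $H$ by $BH$ for invertible $B$ replaces each $H_i$ by $BH_i$ and so leaves $C$ unchanged, hence the standard model with columns $1,\gamma,\dots,\gamma^{n-1}$ is indeed without loss of generality.
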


\begin{proof}
For $c=1$, the result is trivial since
$$
(q-1)^2\frac{\binom{n}{2}}{3}=\frac{(q-1)^2n(n-1)}{6}
$$
is the number of triples in a $q$-ary $2$-$(n,3,1)$-design. Note that any codeword $x$ of weight $3$ cannot have exactly $2$ nonzero coordinates in the same block because there exists a codeword $y$ in such block covering these two coordinates and, hence, we would have $d(x,y)=2$. Thus, the result is also trivial for $c=2$.

If $c>2$, then the codewords of weight $3$ are divided into two classes: a) those with the three nonzero coordinates in the same block, and b) those with the three nonzero coordinates in three different blocks.

Clearly, the number of codewords in the case a) is
$$
c(q-1)^2\frac{\binom{n}{2}}{3}.
$$

For the case b), consider any three distinct blocks $B_{j_1}$, $B_{j_2}$, and $B_{j_3}$ (we can choose these three blocks in $\binom{c}{3}$ ways). In the block $B_{j_1}$, we fix a vector $v$ of weight one (we have $(q-1)n$ such vectors). Now, we claim that there exists exactly one codeword of weight $3$ covering $v$ with the other two nonzero coordinates in $B_{j_2}$ and $B_{j_3}$.

If there are two such codewords, say $x=v+e_2+e_3$ and $y=v+d_2+d_3$ ($e_\ell$ and $d_\ell$ are one-weight vectors with the nonzero coordinate in $B_{j_\ell}$, for $\ell=2,3$), then we know that there are $3$-weight codewords $x'$ and $y'$ with nonzero coordinates in $B_{j_2}$ and $B_{j_3}$, respectively, and covering $e_2+e_3$ and $d_2+d_3$, respectively. Therefore, $x+y+x'+y'$ has weight $2$ leading to a contradiction.

If there are not any codeword covering $v$ with nonzero coordinates in $B_{j_2}$ and $B_{j_3}$, then any vector $v+e_2$ is at distance two from $C$. Thus, we can get $(q-1)^2n^2$ vectors $z$, such that $d(z,C)=d(z,\zero)$. We know that $|C(2)|=(q^{2k}-(q-1)nc-1)|C|$, since the covering radius of $C$ is $\rho=2$, and clearly $|C(1)|= (q-1)nc|C|$. Therefore, the number of vectors in $C(2)$ at distance $2$ from the zero codeword is $(q^{2k}-(q-1)nc-1)$. As a consequence, we should have
$$
(q^{2k}-(q-1)nc-1) \geq (q-1)^2n^2,
$$
which gives $(q^k+1)(q^k-1)-(q^k-1)c \geq (q^k-1)^2$, and hence $c\leq 2$,
which contradicts the assumption $c>2$.

The statement is proved.
\end{proof}

\begin{corollary}
The code $C$ with parity check matrix given in (\ref{code33}) is a quasi-perfect uniformly packed code (hence completely regular) with parameters $[nc,nc-2k,3;2]_q$ and intersection array
$$
\IA=\{(q-1)nc,\left((q-1)n-c+2\right)(c-1);1,c(c-1)\}.
$$
\end{corollary}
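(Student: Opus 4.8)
The plan is to verify that $C$ has minimum distance $3$, covering radius $\rho = 2$, and external distance $s = 2$, and then to invoke Theorem~\ref{params}(iii) to conclude $C$ is quasi-perfect uniformly packed, and Proposition~\ref{estesos}'s cited ancestor (the Proposition after Definition~\ref{def:2.5}) to conclude complete regularity. Once complete regularity is in hand, the intersection array is determined by computing the intersection numbers directly from the combinatorial data already assembled in Propositions~\ref{pesos} and~\ref{pesminim}.

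First I would pin down the basic parameters. The length is $nc$ and the redundancy is $2k$ (the parity check matrix in~(\ref{code33}) has $2k$ rows; one checks they are independent, e.g.\ because $H$ has rank $k$ and the two row-blocks involve disjoint coordinate blocks for the first coordinate position pattern — more carefully, a dependency among the rows would force a nonzero simplex codeword to be shift-invariant, contradicting Lemma~\ref{shifts} as in the proof of Proposition~\ref{pesos}), so $\dim C = nc - 2k$. For the minimum distance: Proposition~\ref{pesminim} shows $|C_3| > 0$ whenever $c \geq 2$ (and for $c=1$ it is a Hamming code, $d=3$), so $d \le 3$; and $d \ge 3$ follows because no two columns of the parity check matrix are linearly dependent — within a block the columns are those of $H$ (a Hamming parity check matrix, pairwise independent), and across blocks two columns $\binom{\br}{\sigma^{i}\br'}$ and $\binom{\br}{\sigma^{j}\br''}$ with $i\ne j$ cannot be proportional since that would again give a shift-fixed simplex codeword. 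Hence $d = 3$ and $e = 1$.

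Next, $\rho = 2$: we have $\rho \le s$ by Theorem~\ref{params}(i), and $s = 2$ because $C^\perp$ has exactly two nonzero weights by Proposition~\ref{pesos}; and $\rho \ge 2$ since $C$ is not perfect (a perfect code would need $e = \rho$, i.e.\ $1 = \rho$, forcing $s = e = 1$ by Theorem~\ref{params}(ii), contradicting $s=2$). So $s = \rho = 2 = e+1$, and Theorem~\ref{params}(iii) gives that $C$ is quasi-perfect uniformly packed, hence completely regular by the Proposition following Definition~\ref{def:2.5}. It remains to compute the six intersection numbers $\{b_0, b_1; c_1, c_2\}$ of this height-$2$ scheme. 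We have $c_1 = 1$ immediately (every vector at distance $1$ from $C$ has a unique nearest codeword since $e = 1$), and $b_0 = (q-1)nc$ since $a_0 = 0$ (only the zero vector is adjacent to the zero codeword within $C$, as $d=3$) and $c_0 = 0$. For $c_2$ and $b_1$ I would use the counting already done: the number of weight-$3$ codewords covering a fixed weight-$2$ vector equals $c_2$ (this is the uniform-packing parameter $\mu$), and from the two classes of weight-$3$ codewords in the proof of Proposition~\ref{pesminim} one reads off that a generic weight-$2$ vector with its two nonzero coordinates in distinct blocks $B_{j_1}, B_{j_2}$ lies under exactly one weight-$3$ codeword per third block $B_{j_3}$, giving $c_2 = c-1$ such codewords — wait, the statement claims $c_2 = c(c-1)$, so I must instead count over all ways the scheme ``descends'': more precisely $c_2$ counts neighbors in $C(1)$ of a vector in $C(2)$, which by the handshake Lemma~\ref{graph} satisfies $b_1 |C(1)| = c_2 |C(2)|$; combined with $|C(1)| = (q-1)nc|C|$, $|C(2)| = (q^{2k} - (q-1)nc - 1)|C|$, and the relation $a_1 = (q-1)nc - b_1 - c_1$, this is one equation. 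The second equation comes from Lemma~\ref{graph} applied at level $0$: $b_0|C| = c_1|C(1)|$, which is already consistent. So I would instead get $b_1$ directly: a vector $x \in C(1)$ has $c_1 = 1$ neighbor in $C$, and its neighbors in $C(2)$ number $b_1$; the remaining $a_1 = (q-1)nc - 1 - b_1$ neighbors lie in $C(1)$. To find $b_1$, count pairs: $x = z + e$ with $z \in C$, $e$ of weight $1$; a neighbor $x + f$ of $x$ (weight-$1$ $f$) lies in $C(2)$ iff $e + f$ is not ``correctable'', i.e.\ iff $e+f$ is not covered by a weight-$\le 3$ codeword appropriately — equivalently $x+f \notin C(1) \cup C(0)$. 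Using $d=3$ and the structure of weight-$2$ cosets, one finds the number of weight-$2$ vectors in the coset of $e$ that remain at distance $2$; this is where Proposition~\ref{pesminim}'s case analysis feeds in, yielding $b_1 = ((q-1)n - c + 2)(c-1)$ after simplification. Finally $c_2 = (q-1)nc - b_1 - a_2$ with $a_2$ determined by the same descent, or more cleanly $c_2$ is obtained by substituting the known $b_1$, $|C(1)|$, $|C(2)|$ into Lemma~\ref{graph}: $c_2 = b_1 |C(1)| / |C(2)| = b_1 (q-1)nc / (q^{2k} - (q-1)nc - 1)$, and a direct computation using $n = (q^k-1)/(q-1)$ reduces this to $c(c-1)$.

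The main obstacle is the bookkeeping for $b_1$: one must show that for a weight-$1$ error $e$ (say with nonzero coordinate in block $B_{j_1}$) the number of weight-$2$ vectors $e + f$ that are \emph{not} within distance $1$ of $C$ — i.e.\ are genuinely at distance $2$ — is exactly $b_1$. The argument splits according to whether $f$'s nonzero coordinate is in $B_{j_1}$ or in a different block, and in each case one uses that the relevant pairs of columns of the parity check matrix are linearly independent (so $e+f$ has nonzero syndrome not equal to any single column's syndrome) except in the cases accounted for by the weight-$3$ codewords counted in Proposition~\ref{pesminim}. Everything else is a routine substitution of $n = (q^k - 1)/(q-1)$ and algebraic simplification, together with the consistency check via Lemma~\ref{graph}.
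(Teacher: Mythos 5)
Your overall frame matches the paper: length, dimension and $d=3$; $s=2$ from Proposition~\ref{pesos}; $\rho=2$ via Theorem~\ref{params}(i) and non-perfectness; quasi-perfect uniformly packed (hence completely regular) via Theorem~\ref{params}(iii); $b_0=(q-1)nc$, $c_1=1$; and $c_2$ obtained from $b_1$ through Lemma~\ref{graph} with $|C(1)|=(q-1)nc|C|$ and $|C(2)|=(q^{2k}-(q-1)nc-1)|C|$. All of that is fine and is exactly the paper's route.

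The genuine gap is the determination of $b_1$ (equivalently $a_1$), which is the heart of the intersection-array claim: you never compute it. You first float an incorrect count ($c_2=c-1$, read off as ``one weight-$3$ codeword per third block'' over a weight-$2$ vector --- note a weight-$2$ vector is covered by at most one weight-$3$ codeword, and a vector in $C(2)$ by none, so that reading is doubly off), then backtrack, and finally assert that a coset/syndrome ``bookkeeping'' over the blocks ``yields $b_1=((q-1)n-c+2)(c-1)$ after simplification'' --- while yourself flagging this bookkeeping as the main obstacle. That is precisely the step that needs a proof. The paper closes it cleanly: having already established complete regularity, Theorem~\ref{designs} makes $C_3$ a $q$-ary $1$-design, so every weight-$1$ vector $z$ is covered by the same number $r$ of weight-$3$ codewords, with $3|C_3|=(q-1)cn\,r$; each such codeword contributes two weight-$2$ neighbors of $z$ in $C(1)$ (and distinct codewords give distinct such neighbors, since two weight-$3$ codewords cannot cover a common weight-$2$ vector), and the $q-2$ weight-$1$ vectors sharing the support of $z$ also lie in $C(1)$, so $a_1=2r+q-2=6|C_3|/((q-1)cn)+q-2$. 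Plugging in Proposition~\ref{pesminim} gives $a_1=(q-1)(n-1)+(c-1)(c-2)+q-2$ and hence $b_1=(q-1)cn-1-a_1=((q-1)n-c+2)(c-1)$, after which your Lemma~\ref{graph} computation of $c_2=c(c-1)$ goes through. Without this (or an actually executed version of your block-by-block count, which would in effect have to reprove that the number of covering weight-$3$ codewords is the same for every weight-$1$ vector and equals $3|C_3|/((q-1)cn)$), the stated array is asserted rather than proved.
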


\begin{proof}
The length, dimension and minimum distance of $C$ are clear. By Proposition \ref{pesos}, $C$ has external distance $s=2$. Since $C$ is not perfect, $1 < \rho$. Thus, by Theorem \ref{params} (i), the covering radius is $\rho=2$, and by Theorem \ref{params} (iii), $C$ is a quasi-perfect uniformly packed code.

The values of the intersection numbers $b_0=(q-1)n$ and $c_1=1$ are straightforward since $C$ has minimum distance $3$.

Now, we compute the intersection number $a_1$, that is, the number of neighbors in $C(1)$ of any vector $z\in C(1)$. Without loss of generality, assume that $z$ is a one-weight vector. Then, $a_1$ is the addition of the number of two-weight vectors covering $z$ and covered by some codeword of weight $3$, and the $q-2$ vectors of weight $1$ at distance $1$ from $z$. Since the set $C_3$ of codewords of weight $3$ defines a $q$-ary 1-design (Theorem \ref{designs}), we have that
\begin{equation}\label{disseny}
3|C_3|=(q-1)cnr,
\end{equation}
where $r$ is the replication number, i.e. the number of codewords in $C_3$ covering $z$ (note that (\ref{disseny}) is a generalization to the $q$-ary case of Corollary \ref{cordissenys} (iv)). Of course, any such codeword covers two vectors of weight $2$ that, also, cover $z$. Thus, we have that $a_1=2r+q-2$. Combining with (\ref{disseny}), we obtain
$$
a_1=\frac{6|C_3|}{(q-1)cn}+q-2,
$$
and substituting $|C_3|$ from Proposition \ref{pesminim}, we get
$$
a_1=\left[(q-1)(n-1)+(c-1)(c-2)\right]+q-2.
$$
Since $b_1=(q-1)cn-c_1-a_1$, we obtain
\begin{eqnarray*}
b_1 &=& (q-1)cn-1-\left[(q-1)(n-1)+(c-1)(c-2)\right]-(q-2)\\
    &=& ((q-1)n-c+2)(c-1).
\end{eqnarray*}

By Lemma \ref{graph},
we have that $b_1|C(1)|=c_2|C(2)|$. Since $C$ has minimum distance $3$, we have $|C(1)|=(q-1)cn|C|$. Also, $|C(2)|=q^{cn}-|C(1)|-|C|$ because the covering radius of $C$ is $\rho=2$. Therefore, we can compute $c_2$:

$$
c_2 = \frac{b_1|C(1)|}{|C(2)|}=\frac{((q-1)n-c+2)(c-1)(q-1)cn|C|}{(q^{2k}-(q-1)nc-1)|C|}.
$$
Substituting $n=(q^k-1)/(q-1)$, the expression simplifies to $c(c-1)$. This completes the proof.
\end{proof}

\begin{remark}
Almost all codes in the family described in Construction I
are not completely transitive codes. However, software computations
suggest that
in the binary case and for any value of $k$ (so $n=2^k-1$), the completely transitive
codes of that family are those with $c\in\{2,3,n-1,n\}$. In
general, in the $q$-ary case when $q$ is a power of two, the completely transitive
codes are those with $c\in \{2,3\}$ and if $q=p^r$, for $p\not=2$,
then the completely transitive codes are those with $c=2$.
\end{remark}

\begin{remark}
By extending the codes in the family given in Construction I we
do not obtain completely regular codes, except for the binary case
when the parameter $c$ equals $2^{k-1}+1$. In this case, the
family of extended $[n(2^{k-1}+1)+1,n(2^{k-1}+1)-2k,4;3]$ codes we
obtain coincides with the family described in Theorem
\ref{estes}.
\end{remark}

\subsection*{Construction II}

The next construction  works again for $q$-ary cyclic Hamming $[n,k,3;1]_q$ codes, where $n=(q^k-1)/(q-1)$ and $\Gcd(n,q-1)=1$. For a given such
code of length $n$ with parity check matrix $H$, the matrices
$H_i$, \; $i = 1,2, \ldots, n-1$ are defined as in Construction I.
Let $c$ be any integer from the range: $1 \leq c \leq
n-1$ and let $C$ be the code with parity check matrix

\[
H^{(c)} = \left[
\begin{array}{ccccc}
H\;& \;0 \;&\,H\,&\,H\,\ldots \,&\,H\\
0\;& \;H \;&\,H\,&\,H_1\,\ldots \,&\,H_c
\end{array}
\right]
\]
where $0$ denotes the zero matrix (of the same size as $H$).

\begin{proposition}\label{pesos2}
The code $C^\perp$ has nonzero weights
$$
w_1=(c+3)q^{k-1}\;\;\mbox{ and }\;\;w_2=(c+2)q^{k-1},
$$
except if $c=n-1$ and $q=2$. In this case, $C^\perp$ has only the nonzero weight $w=2^{2k-1}$ and $C$ is a Hamming code of length $2^{2k}-1$.
\end{proposition}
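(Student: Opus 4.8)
\textbf{Proof plan for Proposition~\ref{pesos2}.}
The argument should run parallel to the proof of Proposition~\ref{pesos}. Write a nonzero codeword $x\in C^\perp$ as $x=(x_{-1},x_0,x_1,\ldots,x_c)$, where $x_{-1}$ corresponds to the first block (rows $H$ and $0$), $x_0$ to the second block (rows $0$ and $H$), and $x_1,\ldots,x_c$ to the remaining blocks (rows $H$ paired with $H,H_1,\ldots,H_c$). A codeword is obtained by choosing a linear combination $v=aH$ of the rows of the top band and a linear combination $w=bH$ of the rows of the bottom band, where $a,b\in\F_q^k$; then $x_{-1}=v$, $x_0=w$, and, for $1\le i\le c$, the $i$-th ordinary block carries $x_i=v+\sigma^{i-1}(w)$ (with the convention that the third displayed block uses shift $0$). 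Since $H$ generates a cyclic simplex code, each of $v$, $w$, and each $\sigma^{i-1}(w)$ has weight $0$ or $q^{k-1}$, and the same is true of every $x_i=v+\sigma^{i-1}(w)$ because a sum of two codewords of the simplex code is again a codeword of the simplex code.

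The weight of $x$ is therefore $\wt(v)+\wt(w)+\sum_{i}\wt(v+\sigma^{i-1}(w))$. First I would dispose of the degenerate cases: if $v=0$ then $x_{-1}=0$ and all ordinary blocks equal $\sigma^{i-1}(w)$, giving total weight $(c+1)q^{k-1}$ if $w\neq 0$ (and $x=0$ if $w=0$); symmetrically if $w=0$ and $v\neq 0$ the total weight is $(c+2)q^{k-1}$. So assume $v\neq 0$ and $w\neq 0$, hence $\wt(v)=\wt(w)=q^{k-1}$. The key point is to count how many of the $c$ ordinary blocks can have $x_i=0$, i.e. how many shifts $i$ satisfy $\sigma^{i-1}(w)=-v$. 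By Lemma~\ref{shifts}, since $\Gcd(n,q^{k-1})=1$, the vector $-v$ (equivalently, the codeword of the simplex code of which it is a shift) has $n$ distinct cyclic shifts; hence among \emph{all} $n$ shifts $\sigma^{0}(w),\ldots,\sigma^{n-1}(w)$ exactly one equals $-v$. Thus at most one of the $c$ shifts occurring in our matrix (namely $\sigma^{0}(w),\sigma^{1}(w),\ldots,\sigma^{c-1}(w)$) can produce a zero block, and this happens for at most one value of $i$. Consequently $\sum_{i=1}^{c}\wt(v+\sigma^{i-1}(w))$ equals $c\,q^{k-1}$ or $(c-1)q^{k-1}$, and adding $\wt(v)+\wt(w)=2q^{k-1}$ gives total weight $(c+2)q^{k-1}$ or $(c+3)q^{k-1}$. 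Comparing with the two degenerate-case weights above, all of these fall into $\{(c+2)q^{k-1},(c+3)q^{k-1}\}$ (using $c\ge 1$), so $C^\perp$ has exactly the two nonzero weights $w_1,w_2$ as claimed, \emph{unless} the weight $(c+3)q^{k-1}$ is never realized.

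It remains to identify when the larger weight $w_1=(c+3)q^{k-1}$ fails to occur; I expect this to be the only delicate point. The weight $(c+3)q^{k-1}$ is attained exactly when $v\neq 0$, $w\neq 0$, and \emph{none} of $\sigma^{0}(w),\ldots,\sigma^{c-1}(w)$ equals $-v$; equivalently, writing the unique shift with $\sigma^{j}(w)=-v$ as $j=j(v,w)\in\{0,\ldots,n-1\}$, we need some choice with $j\ge c$. As $v,w$ range over the nonzero combinations, $j$ ranges over all residues; so $w_1$ is realized as long as $c<n$, which is guaranteed by the hypothesis $c\le n-1$, \emph{except} that when $c=n-1$ the only excluded shift-value is $j=n-1$, and one must check whether $j=n-1$ can be avoided. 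When $q>2$ this is easy: fixing a simplex codeword $s$ and its negative $-s$, the pairs $(v,w)=(aH,bH)$ realizing a given value $j$ form a coset-like set, and for $q>2$ scaling gives enough freedom that the value $j=n-1$ is not forced, so $w_1$ still occurs. When $q=2$ and $c=n-1$, however, $-v=v$, so $x_i=0$ iff $\sigma^{i-1}(w)=v$; since $w$ has all $n$ distinct shifts and $i-1$ runs over $0,\ldots,n-2$, exactly one block is zero precisely when the unique shift equals $v$ for a shift-index in $\{0,\ldots,n-2\}$ — which happens for \emph{all} nonzero $v$ except the single pair where the matching shift index is $n-1$; a direct count (or a dimension/counting check using that $C^\perp$ has $2^{2k}$ codewords and the weight-$w_2$ count forced by the design structure) shows that in fact \emph{every} nonzero codeword then has weight $(c+2)q^{k-1}=(n+1)2^{k-1}=2^{2k-1}$, i.e. $C^\perp$ has the single nonzero weight $2^{2k-1}$. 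Finally, a code whose dual has a single nonzero weight $2^{2k-1}$ on a length $n c = (2^k-1)(n-1)+2n = \ldots = 2^{2k}-1$ with redundancy $2k$ is, by the weight-distribution characterization of simplex codes, the simplex code of dimension $2k$, so $C$ is the Hamming code of length $2^{2k}-1$. I would carry out the $q=2,\ c=n-1$ count explicitly (it is short) since that is where the exceptional clause of the statement lives; the rest is a routine repetition of the Proposition~\ref{pesos} argument with two extra all-or-nothing blocks bookkept separately.
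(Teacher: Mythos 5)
Your overall strategy --- decompose a dual codeword blockwise as $x=(v,w,v+w,v+\sigma(w),\ldots)$ with $v,w$ simplex codewords, and use Lemma~\ref{shifts} to bound the number of vanishing blocks, then treat $q=2$, $c=n-1$ separately --- is exactly the paper's argument (the paper is terser but identical in substance). However, as written your bookkeeping has an off-by-one error that breaks the proof precisely at its delicate point. The matrix $H^{(c)}$ has $c+3$ blocks: besides $B_1$ (carrying $v$) and $B_2$ (carrying $w$) there are $c+1$ ordinary blocks carrying $v+\sigma^i(w)$ for $i=0,1,\ldots,c$, not $c$ blocks with shifts $0,\ldots,c-1$ as in your indexing $x_1,\ldots,x_c$. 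This propagates: the degenerate case $v=0$, $w\neq 0$ gives weight $(c+2)q^{k-1}$, not $(c+1)q^{k-1}$ (the latter contradicts both the proposition and your own subsequent claim that all weights lie in $\{(c+2)q^{k-1},(c+3)q^{k-1}\}$); and in the generic case your sum $(c-1)q^{k-1}+2q^{k-1}$ equals $(c+1)q^{k-1}$, not $(c+3)q^{k-1}$ --- the stated totals only come out once all $c+1$ ordinary blocks are counted.

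The slip matters most in the exceptional case $q=2$, $c=n-1$, which is exactly the step you defer (``a direct count shows\ldots''). With the correct count the shift indices $0,\ldots,c$ exhaust all $n$ cyclic shifts of $w$; since for $q=2$ the simplex code has exactly $n$ nonzero codewords and the shifts of $w$ are pairwise distinct by Lemma~\ref{shifts}, the vector $v$ is necessarily one of these shifts, so exactly one ordinary block vanishes for every choice with $v,w\neq 0$. Hence every nonzero dual codeword has weight $(c+2)2^{k-1}=2^{2k-1}$, with no leftover ``exceptional pair with matching index $n-1$'' as in your account, and the length is $(c+3)n=(2^k+1)(2^k-1)=2^{2k}-1$ (your expression $(2^k-1)(n-1)+2n$ evaluates to $2^{2k}-2^k$, again the missing block). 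Once the block count is repaired, your argument that the larger weight $(c+3)q^{k-1}$ is actually attained when $q>2$ or $c\le n-2$ (choose $v,w$ so that $-v$ is none of the shifts $\sigma^0(w),\ldots,\sigma^c(w)$) does close the statement, and on that point you are in fact more explicit than the paper's proof.
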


\begin{proof}
Assume that $c< n-1$ or $q > 2$. As in Construction I, let $B_1,\ldots,B_{c+3}$ be the sets (blocks) of consecutive $n$ coordinate positions. Note that any linear combination of the first (respectively, second) $k$ rows of $H^{(c)}$ gives a codeword in $C^\perp$ with the zero vector in, and only in, the block $B_2$ (respectively, $B_1$). For a linear combination which gives nonzero vectors in $B_1$ and $B_2$, we have that
the obtained codeword in $C^\perp$ can have the zero vector in at most one block $B_j$, for $j=3,\ldots,c$. This is true by the same argument used in the proof of Proposition \ref{pesos}.

In the binary case, if $c=n-1$, we have that any nonzero codeword in $C^\perp$ has the zero vector in exactly one block. Indeed, any linear combination which gives nonzero vectors in $B_1$ and $B_2$, gives some vector $v+u$, where $v$ is generated by the first $k$ rows of $H^{(c)}$, and $u$ is generated by the second $k$ rows of $H^{(c)}$. Clearly, $v$ and $u$ have the forms:
\begin{eqnarray*}
v &=& \left(y, \zero, y, y,\ldots,y \right), \;\;\mbox{ and }\\
u &=& \left(\zero, x, x, \sigma(x),\sigma^2(x),\ldots,\sigma^{n-1}(x)\right),
\end{eqnarray*}
for some $x,y\in\F_q^n$. Since $x,\sigma(x),\sigma^2(x),\ldots,\sigma^{n-1}(x)$ are all different by Lemma \ref{shifts}, and a simplex code of length $n$ contains $n$ nonzero codewords, we conclude that
$$y\in\{x,\sigma(x),\sigma^2(x),\ldots,\sigma^{n-1}(x)\}.$$
Therefore, $C^\perp$ has only the weight $w=(c+2)2^{k-1}=2^{2k-1}$. In this case, $C$ has length $(c+3)n=(2^k+1)(2^k-1)=2^{2k}-1$. Since the minimum distance is $3$ and the dimension is $2k$, $C$ is a Hamming code.
\end{proof}

\begin{proposition}\label{pesminim2}
The number of codewords in $C$ of weight $3$ is:
$$
|C_3|=\frac{(c+3)n(q-1)}{6}\left[(n-1)(q-1)+(c+1)(c+2)\right].
$$
\end{proposition}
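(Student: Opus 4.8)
The plan is to follow the scheme of the proof of Proposition~\ref{pesminim}: sort the weight-$3$ codewords of $C$ by how their three nonzero coordinates are distributed among the $c+3$ blocks $B_1,\dots,B_{c+3}$, where $B_1,B_2$ are the two special blocks and $B_{3+j}$ ($j=0,\dots,c$, with $H_0=H$) is the block carrying $H_j$ on the bottom. We may take the columns of $H$ to be $1,\alpha,\dots,\alpha^{n-1}$ with $\alpha$ a primitive $n$-th root of unity in $\F_{q^k}\cong\F_q^k$, so that $\alpha^m\neq1$ for $0<m<n$ (this is where $\Gcd(n,q-1)=1$ is used), the weight-one vectors of a single block correspond bijectively to $\F_{q^k}^{\times}$ through $\be\mapsto$ its $H$-syndrome, and shifting the columns by $j$ multiplies the syndrome by $\alpha^{-j}$. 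Since $H$ and $H_j$ span the same cyclic simplex code, $\Ker H=\Ker H_j$; hence a vector supported in a single block $B_m$ lies in $C$ exactly when its restriction to $B_m$ is a Hamming codeword (the remaining block-constraint being vacuous on $B_1$ and $B_2$), and no weight-$3$ codeword of $C$ can have precisely two of its nonzero coordinates in one block and the third elsewhere — subtracting the unique weight-$3$ Hamming codeword supported in that block and agreeing on those two coordinates, which lies in $C$ by the previous sentence, would leave a codeword of weight $2$, contradicting $d(C)=3$. Therefore $C_3$ is the disjoint union of the words supported in a single block and the words having exactly one nonzero coordinate in each of three distinct blocks.

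By $\Ker H=\Ker H_j$ the single-block words are precisely the weight-$3$ Hamming codewords placed in one of the $c+3$ blocks, so they number $(c+3)\dfrac{(q-1)^2n(n-1)}{6}$. For the three-block words, fix three distinct blocks $B_a,B_b,B_d$ and a weight-one vector in $B_a$; I claim there is exactly one weight-$3$ codeword of $C$ with support contained in these blocks extending it. The two $\F_{q^k}$-valued parity-check equations (from the top $k$ and bottom $k$ rows of $H^{(c)}$) collapse to $X_a p_a+X_b p_b+X_d p_d=\zero$ in $\F_{q^k}^2$, where $X_m\in\F_{q^k}^{\times}$ is the $H$-value of the weight-one part in $B_m$ and the coefficient pair is $p_m=(1,0)$ for $B_1$, $p_m=(0,1)$ for $B_2$, and $p_m=(1,\alpha^{-j})$ for the block carrying $H_j$. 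These $c+3$ pairs are pairwise linearly independent over $\F_{q^k}$ since $\operatorname{ord}(\alpha)=n>c$; so, with $X_a$ prescribed, the relation is a nonsingular $2\times2$ system for $(X_b,X_d)$ with a unique solution, and the pairwise independence of $p_a,p_b,p_d$ forces $X_b,X_d\neq0$, which gives a genuine weight-$3$ codeword (each nonzero element of $\F_{q^k}$ being the $H$-value of a unique weight-one vector of a block). Hence each of the $\binom{c+3}{3}$ triples contributes $(q-1)n=q^k-1$ such codewords.

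Adding the two contributions and using $6\binom{c+3}{3}=(c+1)(c+2)(c+3)$ gives
\[
|C_3|=(c+3)\frac{(q-1)^2n(n-1)}{6}+\binom{c+3}{3}(q-1)n=\frac{(c+3)n(q-1)}{6}\Bigl[(n-1)(q-1)+(c+1)(c+2)\Bigr],
\]
which is the claimed value; the same computation covers the degenerate case $c=n-1$, $q=2$ of Proposition~\ref{pesos2}, where it specialises to $\tfrac16N(N-1)$ with $N=2^{2k}-1$, the weight-$3$ count of the Hamming code. The part I expect to be most delicate is the three-block count: grouping the various placements of $B_1,B_2$ among the three chosen blocks into the single statement about the $2\times2$ system, and especially verifying that its unique solution never has a vanishing coordinate, where the essential input is $\alpha^{-j}\neq\alpha^{-j'}$ for $0\le j<j'\le c$, i.e.\ Lemma~\ref{shifts} for the simplex code. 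A route bypassing the case analysis is to obtain the weight distribution of $C^{\perp}$ directly, as in the Remark following Proposition~\ref{pesos}: counting the nonzero combinations $(\ba,\bb)H^{(c)}$ that vanish on exactly one block — according to whether that block is forced by $\ba=\zero$, $\bb=\zero$, $\ba+\bb=\zero$, or $\ba H=-\bb H_j$ for one $j\in\{1,\dots,c\}$ — yields $B_{w_2}=(c+3)(q^k-1)$ and hence $B_{w_1}=q^{2k}-1-(c+3)(q^k-1)$, after which $|C_3|$ follows from the inverse MacWilliams transform by evaluating the degree-$3$ Krawtchouk polynomial at $w_1$ and $w_2$.
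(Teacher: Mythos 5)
Your proposal is correct, and although it counts exactly the same objects as the paper, it organizes and justifies the count along a genuinely different route. The paper partitions $C_3$ into four cases according to how many of the three nonzero coordinates fall in $B_1\cup B_2$, imports the count of Proposition~\ref{pesminim} (applied with $c+1$ blocks) for the words supported in $B_3\cup\cdots\cup B_{c+3}$, and treats the mixed cases by separate column--dependence arguments; in particular, the existence half of the key claim ``a triple of blocks and a weight-one vector in one of them determine exactly one weight-$3$ codeword'' is obtained in Proposition~\ref{pesminim} only indirectly, via a counting contradiction involving $|C(2)|$ and $\rho=2$. You use just two classes: single-block words, which by $\Ker H_j=\Ker H$ are precisely the embedded Hamming triples, $(c+3)\frac{(q-1)^2n(n-1)}{6}$ in number, and words meeting three distinct blocks, for which you prove the per-triple count $(q-1)n$ uniformly over all $\binom{c+3}{3}$ triples by passing to $\F_{q^k}$-syndromes: the two check equations collapse to $X_ap_a+X_bp_b+X_dp_d=\zero$ with pairwise independent coefficient pairs $(1,0)$, $(0,1)$, $(1,\alpha^{-j})$ (distinctness of the $\alpha^{-j}$ being exactly where $\Gcd(n,q-1)=1$ and $c\leq n-1$ enter, in the spirit of Lemma~\ref{shifts}), so prescribing $X_a$ yields a unique solution with $X_b,X_d\neq 0$. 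The identity $\binom{c+3}{3}=\binom{c+1}{3}+2\binom{c+1}{2}+(c+1)$ shows your two classes refine into the paper's four cases, so the totals agree, and your two-in-one-block exclusion is the same subtraction argument the paper uses. What your route buys is uniformity and self-containedness: existence and uniqueness come in one stroke from a nonsingular $2\times 2$ system, with no appeal to Proposition~\ref{pesminim} or to $\rho=2$, and it also covers the degenerate case $q=2$, $c=n-1$; the only point worth one explicit sentence is the normalization that the cyclic Hamming code's parity check may be taken with columns $1,\alpha,\dots,\alpha^{n-1}$, which is harmless because replacing $H$ by $AH$ with $A$ invertible multiplies $H^{(c)}$ on the left by an invertible matrix and leaves $C$ unchanged. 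The closing MacWilliams sketch is a further valid alternative but is not needed for the proof.
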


\begin{proof}
We compute separately the number of codewords in $C_3$ for the different possible cases.
\begin{itemize}
\item[a)] Codewords in $C_3$ with the three nonzero coordinates in $B_3 \cup \cdots \cup B_{c+3}$. We can apply here the arguments of Proposition \ref{pesminim} for $c+1$ instead of $c$. The result is:
\begin{equation}\label{eqa}
\frac{(c+1)n(q-1)}{6}\left[(q-1)(n-1)+c(c-1)\right].
\end{equation}
\item[b)] Codewords in $C_3$ with the three nonzero coordinates in $B_1 \cup B_2$. Clearly, all the nonzero coordinates must be in $B_1$ or in $B_2$. Since the triples in $B_1$ (or $B_2$) form a Steiner system (Theorem \ref{designs}), we have that this number of codewords is:
\begin{equation}\label{eqb}
2\frac{(q-1)^2n(n-1)}{6}=\frac{(q-1)^2n(n-1)}{3}.
\end{equation}
\item[c)] Codewords in $C_3$ with exactly one nonzero coordinate in $B_3 \cup \cdots \cup B_{c+3}$. Consider any column $\bh_i$ of $H^{(c)}$ in $B_3 \cup \cdots \cup B_{c+3}$. It is clear that there is exactly one column $\bh_j$ in $B_1$ and one column $\bh_\ell$ in $B_2$, such that $\bh_i$, $\bh_j$ and $\bh_\ell$ are linearly dependent. Hence, in this case we have exactly one codeword for each coordinate (and its multiples) in $B_3 \cup \cdots \cup B_{c+3}$. Thus, the result is:
\begin{equation}\label{eqc}
(q-1)n(c+1).
\end{equation}
\item[d)] Codewords in $C_3$ with exactly one nonzero coordinate in $B_1 \cup B_2$. The remaining pair of nonzero coordinates cannot be in the same block. Indeed, one such pair of coordinates is already covered by a triple in the same block. The corresponding columns of $H^{(c)}$ must have equal (up to multiples) the first $k$ coordinates or the second $k$ coordinates (depending on the given nonzero coordinate is either in $B_2$ or in $B_1$, respectively). Hence, for any pair of blocks in $\{B_3,\ldots,B_{c+3}\}$ we can choose $n$ columns (and their $q-1$ multiples) of one of these two blocks and we have two possibilities for the other block. The result is:
\begin{equation}\label{eqd}
2n(q-1)\binom{c+1}{2}=(c+1)cn(q-1).
\end{equation}
\end{itemize}

Adding (\ref{eqa}), (\ref{eqb}), (\ref{eqc}), and (\ref{eqd}), we obtain the statement.
\end{proof}

\begin{corollary}\label{CR1}
$\mbox{ }$
\begin{enumerate}
\item[(i)] For $q=2$ and $c=n-1$, $C$ is a binary Hamming code of length $2^{2k}-1$.
\item[(ii)] For $q> 2$ or $c < n-1$,  $C$ is a
linear completely regular $[(c+3)n,(c+3)n-2k,3;2]_q$ code
with intersection array
$$ \IA = \{(c+3)n(q-1), (c+2)\left((q-1)n-1-c\right); 1, (c+2)(c+3)\}.$$
\end{enumerate}
\end{corollary}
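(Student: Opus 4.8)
Part~(i) requires no work: it is precisely the last assertion of Proposition~\ref{pesos2}. For part~(ii) I would first pin down the elementary parameters, assuming throughout that $q>2$ or $c<n-1$. The length $(c+3)n$ is immediate from the block structure of $H^{(c)}$. For the dimension I would check that $\rank H^{(c)}=2k$: restricting $H^{(c)}$ to the coordinates of $B_1$ and $B_2$ yields the block-diagonal matrix $\left[\begin{smallmatrix}H&0\\0&H\end{smallmatrix}\right]$, of rank $2k$, so $\rank H^{(c)}=2k$ and $\dim C=(c+3)n-2k$. For the minimum distance I would argue that no two columns of $H^{(c)}$ are proportional: within a single block this is the Hamming property of $H$, and for two columns in different blocks it follows directly, using $\Gcd(n,q-1)=1$ and the bound $c\le n-1$, exactly as in the proof of Proposition~\ref{pesos}. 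Hence $d\ge 3$, while $d\le 3$ because $|C_3|>0$ by Proposition~\ref{pesminim2}; so $d=3$ and $e=1$.

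Next I would establish complete regularity. By Proposition~\ref{pesos2}, under the hypothesis $q>2$ or $c<n-1$ the dual code $C^\perp$ has exactly two nonzero weights, so the external distance of $C$ is $s=2$. Since $e=1\ne s$, Theorem~\ref{params}(ii) shows $C$ is not perfect, hence $\rho>1$, while Theorem~\ref{params}(i) gives $\rho\le s=2$; therefore $\rho=2=e+1=s$. Then Theorem~\ref{params}(iii) shows $C$ is quasi-perfect uniformly packed, hence completely regular, and since $\rho=2$ its intersection array has the form $\{b_0,b_1;c_1,c_2\}$.

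It remains to compute the four intersection numbers, following the same scheme as the corollary after Proposition~\ref{pesminim}. From $d=3$ one reads off $c_1=1$ and $b_0=(q-1)(c+3)n$. To get $a_1$, I would take $z\in C(1)$ of weight one and count its neighbors in $C(1)$: the $q-2$ other weight-one vectors on the same coordinate, plus the weight-two vectors covering $z$ that are themselves covered by a weight-three codeword. Since $C_3$ is a $q$-ary $1$-$((c+3)n,3,\lambda)_q$-design by Theorem~\ref{designs}(i), we have $3|C_3|=(q-1)(c+3)n\,r$ for the replication number $r$, and each such codeword covering $z$ gives two weight-two vectors covering $z$, so $a_1=2r+(q-2)$; substituting $|C_3|$ from Proposition~\ref{pesminim2} gives $r=\tfrac12\big[(n-1)(q-1)+(c+1)(c+2)\big]$. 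Then $b_1=(q-1)(c+3)n-c_1-a_1$, which simplifies by routine algebra to $(c+2)\big((q-1)n-c-1\big)$. For $c_2$ I would invoke Lemma~\ref{graph}, $b_1|C(1)|=c_2|C(2)|$, with $|C(1)|=(q-1)(c+3)n\,|C|$ (radius-one balls around codewords are disjoint since $d=3$) and $|C(2)|=q^{(c+3)n}-|C|-|C(1)|=\big(q^{2k}-1-(q-1)(c+3)n\big)|C|$; substituting $n=(q^k-1)/(q-1)$ into the resulting quotient, the factors $q^k-1$ and $q^k-c-2$ cancel, leaving $c_2=(c+2)(c+3)$. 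This produces the stated $\IA$.

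I do not expect a genuine obstacle here: the argument runs in complete parallel with the Construction~I corollary. The only steps needing a little care are the verification that columns of $H^{(c)}$ coming from two distinct ``mixed'' blocks $B_i,B_j$ with $i,j\ge 3$ are non-proportional (this is exactly where $\Gcd(n,q-1)=1$ and the bound $c\le n-1$ are used, and where the exceptional Hamming case $q=2$, $c=n-1$ is ruled out), and the two routine algebraic simplifications yielding $b_1$ and $c_2$.
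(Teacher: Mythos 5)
Your proposal is correct and follows essentially the same route as the paper's proof: external distance $s=2$ from Proposition~\ref{pesos2}, hence $\rho=2$ and quasi-perfect uniformly packed via Theorem~\ref{params}, then $a_1$ via the replication number of the $1$-design $C_3$ using Proposition~\ref{pesminim2}, and $c_2$ via Lemma~\ref{graph}. The only difference is that you spell out the rank and minimum-distance verifications that the paper dismisses as ``clear,'' which is harmless.
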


\begin{proof}
(i) It is already proved in Proposition \ref{pesos2}.

(ii) The length, dimension and minimum distance of $C$ are clear. By Proposition \ref{pesos2}, $C$ has external distance $s=2$. Since $C$ is not perfect, we have that $\rho > 1$ and, by Theorem \ref{params} (i), the covering radius is $\rho=2$. Hence, by Theorem \ref{params} (iii), $C$ is a quasi-perfect uniformly packed code.

The values of the intersection numbers $b_0=(c+3)n(q-1)$ and $c_1=1$ are straightforward since $C$ has minimum distance $3$.

Now, we compute the intersection number $a_1$, that is, the number of neighbors in $C(1)$ of any vector $z\in C(1)$. Without loss of generality, assume that $z$ is a one-weight vector. Then, $a_1$ is the addition of the number of two-weight vectors covering $z$ and covered by some codeword of weight $3$, and the $q-2$ vectors of weight $1$ at distance $1$ from $z$. Since the set $C_3$ of codewords of weight $3$ defines a $q$-ary 1-design (Theorem \ref{designs}), we have that
\begin{equation}\label{disseny1}
3|C_3|=(q-1)cnr,
\end{equation}
where $r$ is the replication number, i.e. the number of codewords in $C_3$ covering $z$. Of course, any such codeword covers two vectors of weight $2$ that, also, cover $z$. Thus, we have that $a_1=2r+q-2$. Combining with (\ref{disseny1}), we obtain
$$
a_1=\frac{6|C_3|}{(c+3)n(q-1)}+q-2,
$$
and substituting $|C_3|$ from Proposition \ref{pesminim2}, we get
$$
a_1=(q-1)n-1+(c+1)(c+2).
$$
Since $b_1=(c+3)(q-1)n-c_1-a_1$, we obtain
$$
b_1 = (c+3)(q-1)n-1-\left[(q-1)n-1+(c+1)(c+2)\right]=(c+2)((q-1)n-1-c).
$$
By Lemma \ref{graph},
$b_1|C(1)|=c_2|C(2)|$. Since $C$ has minimum distance $3$, we have $|C(1)|=(c+3)(q-1)n|C|$. Also, $|C(2)|=q^{(c+3)n}-|C(1)|-|C|$ because the covering radius of $C$ is $\rho=2$. Therefore, we can compute $c_2$:

$$
c_2 = \frac{b_1|C(1)|}{|C(2)|}=\frac{(c+2)((q-1)n-1-c)(c+3)(q-1)n|C|}{(q^{2k}-(c+3)(q-1)n-1)|C|}.
$$
Substituting $n=(q^k-1)/(q-1)$, the expression simplifies to $(c+2)(c+3)$.
\end{proof}

\begin{remark}
Almost all codes in the family given in Corollary \ref{CR1} are not completely
transitive. However, in the binary case, using computer calculations,
we conjecture that for any value of $k>2$, the codes of Corollary \ref{CR1}, are
completely transitive for $c\in \{2^k-5,2^k-4,2^k-3\}$.
\end{remark}

\begin{remark}
In the binary case, the extension of the codes given in Corollary \ref{CR1} are not completely regular
in almost all cases. However, for each value of $k$, there
are exactly two values of $c$ such that the obtained binary extended code
is completely regular as we show in the next theorem.
\end{remark}

Of course, for $q=2$ and $c=n-1$, the extended code is an extended Hamming code. Therefore, we consider the binary cases where $1\leq c \leq n-2$.

\begin{theorem}\label{estes}
For $1\leq c \leq n-2$ and $q=2$, the extended code $C^*$ is a completely regular code if and only if $c=2^{k-1}-2$. In this case, $C$ is a $[2^{k-1}(2^k+1),2^{k-1}(2^k+1)-1-2k,4;3]$ code with
$$
\IA=\{2^{k-1}(2^k+1),\;2^{k-1}(2^k+1)-1,\;2^{2k-2};\;1,\;2^{k-1}(2^{k-1}+1),\;2^{k-1}(2^k+1)\}.
$$
\end{theorem}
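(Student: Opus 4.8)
The plan is to reduce both directions of the equivalence to the external distance $s^*$ of the extended code $C^*$, and then to read off the intersection array from a parity argument together with the $2$-design structure of $C^*_4$. Write $N=(c+3)n=(c+3)(2^k-1)$ for the length of $C$; note that $N+1=2^{k-1}(2^k+1)$ exactly when $c=2^{k-1}-2$. First I would record the elementary facts: $C$ has codewords of weight $3$ (Proposition~\ref{pesminim2} gives $|C_3|>0$) and minimum distance $3$, so $C^*$ has only even weights and minimum distance $d^*=4$; thus $e^*=1$ and $d^*=2e^*+2$, so Theorem~\ref{designs}(ii) and Proposition~\ref{estesos} apply to $C^*$, and $\rho(C^*)\le\rho(C)+1=3$ since extending a radius‑$\rho$ code gives radius at most $\rho+1$.

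Next I would compute the nonzero weights of $(C^*)^\perp$. Since $C^*$ has a parity‑check matrix obtained from $H^{(c)}$ by appending a zero column and then the all‑ones row, $(C^*)^\perp=\{(u,0):u\in C^\perp\}\cup\{(\mathbf 1+u,1):u\in C^\perp\}$. By Proposition~\ref{pesos2} (for $q=2$, $c\le n-2$) the code $C^\perp$ has nonzero weights $w_1=(c+3)2^{k-1}$ and $w_2=(c+2)2^{k-1}$, both actually attained (a combination of the first $k$ rows of $H^{(c)}$ has weight $w_2$, a generic combination has weight $w_1$). Hence the weight set of $(C^*)^\perp$ is $\{0,w_2,w_1,\,N+1-w_1,\,N+1-w_2,\,N+1\}$. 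The crux is then a short case check: for $1\le c\le n-2$ and $k\ge2$, two of these five nonzero weights coincide only when $w_1+w_2=N+1$, i.e. $2^{k-1}=c+2$, i.e. $c=2^{k-1}-2$, and in that case $N+1-w_1=w_2$ and $N+1-w_2=w_1$ simultaneously. So $s^*=3$ if $c=2^{k-1}-2$ and $s^*=5$ otherwise. The equivalence now follows from Theorem~\ref{params}: if $c\neq 2^{k-1}-2$ then $s^*=5$ while $\rho(C^*)\le3$, so $C^*$ cannot be completely regular (Theorem~\ref{params}(iv) would force $\rho(C^*)=s^*=5$); if $c=2^{k-1}-2$ then $C^*$ has even weights with $d^*=4=2s^*-2$, so $C^*$ is completely regular by Theorem~\ref{params}(vi), with $\rho(C^*)=s^*=3$ by Theorem~\ref{params}(iv).

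For the intersection array in the case $c=2^{k-1}-2$ I would combine two ingredients. Parity: since all codewords of $C^*$ have even weight, $d(y,C^*)\equiv\wt(y)\pmod 2$, so $C^*(1),C^*(3)$ consist of odd‑weight vectors and $C^*(0),C^*(2)$ of even‑weight ones; as neighbours flip the parity, a vector of $C^*(1)$ has no neighbour in $C^*(1)$, giving $a_1=0$, and every neighbour of a vector of $C^*(3)$ lies in $C^*(2)$, giving $c_3=N+1=2^{k-1}(2^k+1)$; with the obvious $b_0=N+1$ and $c_1=1$ this yields $b_1=N=2^{k-1}(2^k+1)-1$. Design count: by Theorem~\ref{designs}(ii), $C^*_4$ is a $2$‑$(N+1,4,\lambda^*_2)$ design; combining $\lambda^*_2\binom{N+1}{2}=6|C^*_4|$ with $4|C^*_4|=(N+1)|C_3|$ from Proposition~\ref{estesos} gives $\lambda^*_2=3|C_3|/N$, and substituting $|C_3|$ from Proposition~\ref{pesminim2} with $q=2$ and $c=2^{k-1}-2$ gives $\lambda^*_2=2^{2k-3}+2^{k-2}-1$. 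A weight‑$2$ vector $z\in C^*(2)$ has $2$ neighbours of weight $1$ (all in $C^*(1)$) and, since each weight‑$3$ vector is covered by at most one codeword of weight $4$, exactly $2\lambda^*_2$ neighbours of weight $3$ in $C^*(1)$; hence $c_2=2\lambda^*_2+2=2^{k-1}(2^{k-1}+1)$. Finally $b_2$ drops out of $\sum_{l=0}^{3}|C^*(l)|=2^{N+1}$ with $|C^*(0)|=2^{N-2k}$, $|C^*(1)|=(N+1)2^{N-2k}$ and the relations $b_i|C^*(i)|=c_{i+1}|C^*(i+1)|$ of Lemma~\ref{graph}, yielding $b_2=2^{2k-2}$.

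I expect the main obstacle to be the case analysis of the dual weights — the step producing the dichotomy $s^*\in\{3,5\}$ — together with confirming that all five nonzero weights of $(C^*)^\perp$ are genuinely attained: this needs care with the admissible ranges of $c$ and $k$ so that no accidental coincidence slips in (for instance $w_2=N+1-w_2$ would require $c=n-1$, which is excluded). Once that is settled, the equivalence follows formally from Theorem~\ref{params}, and the intersection numbers are pure bookkeeping via the parity argument, the $2$‑design on $C^*_4$, Propositions~\ref{estesos} and~\ref{pesminim2}, and Lemma~\ref{graph}.
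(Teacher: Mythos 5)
Your proposal is correct and follows essentially the same route as the paper: enumerate the nonzero weights of $(C^*)^\perp$ from the extended parity-check matrix, show that exactly three weights occur precisely when $w_1+w_2=N+1$, i.e.\ $c=2^{k-1}-2$, deduce the equivalence from Theorem~\ref{params}(iv) and (vi), and obtain the intersection array from the $2$-design structure of $C^*_4$ via Propositions~\ref{estesos} and~\ref{pesminim2} and Lemma~\ref{graph}. The only (harmless) deviations are that you use the elementary bound $\rho(C^*)\le\rho(C)+1$ where the paper cites Brouwer's exact result $\rho^*=\rho+1$, and that you compute $c_2=2\lambda^*_2+2$ directly and then get $|C^*(2)|$ from Lemma~\ref{graph}, whereas the paper first counts $|C^*(2)|$ by double counting and then derives $c_2$ — equivalent bookkeeping.
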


\begin{proof}
As can be seen in \cite[Prop. 1.1]{Bro2}, $C^*$ has covering radius $\rho^*=\rho+1=3$. Hence, if $C^*$ is completely regular, it must have external distance $s^*=3$. In other words, $(C^*)^\perp$ must have exactly $3$ nonzero weights (Theorem \ref{params} (iv)). A generator matrix for $(C^*)^\perp$ is obtained adding, first the zero column, and second the all-one row to the matrix $H^{(c)}$. Therefore, $(C^*)^\perp$ has at least the nonzero weights $w=(c+3)n+1$, $w_1=(c+3)2^{k-1}$ and $w_2=(c+2)2^{k-1}$ (see Proposition \ref{pesos2}). If $(C^*)^\perp$ has exactly these weights, then it is clear that $w_1+w_2=w$. This condition is equivalent to
$$
(2c+5)2^{k-1}=(c+3)(2^k-1)+1,
$$
which implies $c=2^{k-1}-2$.

In that case, $C^*$ is a $[2^{k-1}(2^k+1),2^{k-1}(2^k+1)-1-2k,4;3]$ code and, by Theorem \ref{params} (vi), we have that $C^*$ is a completely regular code.

Now, we compute the intersection numbers. Let $N=2^{k-1}(2^k+1)$ be the length of $C^*$. Since the minimum distance in $C^*$ is $4$, it is clear that $b_0=N$ and $c_1=1$. Moreover, giving a one-weight vector $z\in C^*(1)$, all its neighbors, except the zero codeword, are vectors in $C^*(2)$. Thus, $b_1=N-1$. For a vector $y\in C^*(3)$, we have that all its neighbors are in $C^*(2)$. Hence, $c_3=N$.

By Proposition \ref{estesos}, we have that $4|C^*_4|=N|C_3|$. From this and Proposition \ref{pesminim2}, we obtain:
\begin{eqnarray}\label{pes4}
|C^*_4| &=& \frac{2^{k-1}(2^k+1)(2^{k-1}+1)(2^k-1)}{4\cdot 6}\left[2^k-2+2^{k-1}(2^{k-1}-1)\right]\nonumber \\
        &=& \frac{2^{k-1}(2^{2k-2}-1)(2^{2k}-1)(2^{k-2}+1)}{12}.
\end{eqnarray}

By Theorem \ref{designs}, the set $C^*_4$ defines a $2$-$(N,4,\lambda)$-design. Using Proposition \ref{blocks} and (\ref{pes4}), we can compute the parameter $\lambda$:
\begin{eqnarray}
|C^*_4|=\lambda\frac{N(N-1)}{12}\;\Longrightarrow\;\lambda &=& \frac{2^{k-1}(2^{2k-2}-1)(2^{2k}-1)(2^{k-2}+1)}{2^{k-1}(2^k+1)\left(2^{k-1}(2^k+1)-1\right)}\nonumber \\
                                                           &=& \frac{(2^{2k-2}-1)(2^k-1)(2^{k-2}+1)}{2^{k-1}(2^k+1)-1}.\label{lambda}
\end{eqnarray}

Let $y\in C^*(2)$, without loss of generality, we can assume that $y$ has weight $2$. Then, $y$ is at distance two of exactly $\lambda+1$ codewords in $C^*$. Since any codeword has $\binom{N}{2}$ vectors at distance $2$, and all such vectors are in $C^*(2)$, we have the relation:
\begin{equation}\label{C2}
|C^*|\frac{N(N-1)}{2}=(\lambda+1)|C^*(2)|.
\end{equation}
Alternatively, (\ref{C2}) can be obtained counting in two ways the number of edges of the bipartite graph that has $C^*\cup C^*(2)$ as set of vertices, and a vertex in $C^*$ is adjacent to a vertex in $C^*(2)$ if the corresponding vectors are at distance two. Now, using (\ref{lambda}) and (\ref{C2}), we can compute $|C^*(2)|$:
\begin{eqnarray}\label{calculC2}
|C^*(2)| &=& |C^*|\frac{2^{k-2}(2^k+1)\left[2^{k-1}(2^k+1)-1\right]\left[2^{k-1}(2^k+1)-1\right]}{(2^{2k-2}-1)(2^k-1)(2^{k-2}+1)+2^{k-1}(2^k+1)-1}\nonumber \\
         &=& |C^*|\frac{2^{k-2}(2^k+1)(2^{4k-2}+2^{3k-1}-2^{2k}+2^{2k-2}-2^k+1)}{2^{k-2}(2^{3k-2}-2^{2k-2}+2^{2k}-1)}\nonumber \\
         &=& |C^*|(2^k+1)(2^k-1)=|C^*|(2^{2k}-1).
\end{eqnarray}
Next, we compute $|C^*(3)|= 2^N-|C^*|-|C^*(1)|-|C^*(2)|$. Clearly, $|C^*(1)|=N|C^*|$. Therefore, using (\ref{calculC2}), we obtain:
\begin{equation}\label{C3}
|C^*(3)|=|C^*|\left[2^{2k+1}-1-2^{k-1}(2^k+1)-(2^{2k}-1)\right]=|C^*|2^{k-1}(2^k-1).
\end{equation}

By Lemma \ref{graph}, we have that $b_1|C^*(1)|=c_2|C^*(2)|$ and $b_2|C^*(2)|=c_3|C^*(3)|$. Using these relations, (\ref{calculC2}), and (\ref{C3}), we obtain:
\begin{eqnarray*}
c_2 &=& \frac{N(N-1)}{(2^k+1)(2^k-1)}=\frac{2^{k-1}(2^k+1)(2^{2k-1}+2^{k-1}-1)}{(2^k+1)(2^k-1)}=2^{k-1}(2^{k-1}+1);\\
b_2 &=& \frac{2^{k-1}(2^k+1)2^{k-1}(2^k-1)}{(2^k+1)(2^k-1)}=2^{2k-2}.
\end{eqnarray*}
The statement is proved.
\end{proof}

\section{Sporadic completely regular codes by concatenation}

We have computationally checked that the following codes are completely regular with the specified parameters.

\begin{enumerate}
\item Let $C$ be the binary $[15,9,3;3]$-code with parity check matrix
\begin{equation*}
H = \left[
\begin{array}{ccccc}
\;K\;&0\;&0\;&K\;&K\;\\
\;0\;&K\;&0\;&K\;&K_1\;\\
\;0\;&0\;&K\;&K\;&K_2\;
\end{array}
\right],\;\;\mbox{where}\;\;
K = \left[
\begin{array}{ccc}
\;1\;&0\;&1\;\\
\;0\;&1\;&1\;
\end{array}
\right]
\end{equation*}
and $K_1$ (respectively, $K_2$) is obtained by one cyclic shift of
the columns of $K$ in one position (respectively, by two cyclic shifts). Then, $C$ is
 a $[15,9,3;3]$ completely regular code with
$\IA = \{15,12,1; 1,4,15\}.$
The binary $[16,9,4;4]$-code, obtained by extension of $C$, is completely regular with
$$\IA = \{16,15,12,1; 1,4,15,16\}.$$

\item Denote by $D(u,q)$ a difference matrix \cite{Beth}, i.e. a
square matrix of the order $qu$ over an additive group of order
$q$, such that the component-wise difference of any two rows
contains any element of the group exactly $u$ times.

Take the difference matrix
$D(2,3)$
\[
D=\left[
\begin{array}{cccccc}
0~&~0~&~0~&~0~&~0&~0\\
0~&0~&~1~&~2~&~2~&~1\\
0~&1~&~0~&~1~&~2~&~2\\
0~&2~&~1~&~0~&~1~&~2\\
0~&2~&~2~&~1~&~0~&~1\\
0~&1~&~2~&~2~&~1~&~0\\
\end{array}
\right]
\]

Let $H$ be a binary $(12 \times 18)$ matrix obtained from $D$ by
changing any element $i$ by the matrix $K_i$. Then, the
$[18,12,3;2]$ code with parity check matrix $H$ is a completely regular code
with
$\IA = \{18,15;1,6\}.$

\item Do the same construction as in item 2 for the matrix $D^*$, which is the difference matrix $D(2,3)$
without the trivial column.
The resulting $[15,9,3;3]$ code is CR with $\IA = \{15,12,1;1,4,15\}$. This code coincides with the code described in item 1.
\end{enumerate}

\section*{Acknowledgements}
This work has been partially supported by the Spanish grants TIN2016-77918-P, AEI/FEDER, UE., MTM2015-69138-REDT; the Catalan AGAUR grant 2014SGR-691 and
also by Russian fund of fundamental researches (15-01-08051).
\bibliographystyle{AIMS}

\begin{thebibliography}{10}

\bibitem{AssGM}
\newblock E.~Assmus, J.M. Goethals and H.~Mattson,
\newblock Generalized t-designs and majority decoding of linear codes,
\newblock \emph{Information and Control}, \textbf{32} (1976), 43--60.

\bibitem{BZZ}
\newblock L.A. Bassalygo, G.V. Zaitsev, V.A. Zinoviev,
\newblock Uniformly packed codes.
\newblock \emph{Problems Inform. Transmiss}, \textbf{10}, (1974), 9--14.

\bibitem{Beth}
\newblock T.~Beth, D.~Jungnickel and H.~Lenz,
\newblock \emph{Design theory}, vol.~69,
\newblock Cambridge University Press, 1999.

\bibitem{Blake}
\newblock I.~F. Blake and R.~C. Mullin,
\newblock \emph{The mathematical theory of coding},
\newblock Academic Press, 2014.




\bibitem{Bro2}
\newblock A.~E. Brouwer,
\newblock On complete regularity of extended codes,
\newblock \emph{Discrete mathematics}, \textbf{117} (1993), 271--273.

\bibitem{BCN}
\newblock A.~E. Brouwer, A.~M. Cohen and A.~Neumaier,
\newblock \emph{Distance-Regular Graphs},
\newblock Springer, 1989.

\bibitem{Cald}
\newblock R.~Calderbank and W.~Kantor,
\newblock The geometry of two-weight codes,
\newblock \emph{Bulletin of the London Mathematical Society}, \textbf{18}
  (1986), 97--122.

\bibitem{Del}
\newblock P.~Delsarte,
\newblock \emph{An algebraic approach to the association schemes of coding
  theory},
\newblock Thesis, 1973.

\bibitem{Giu}
\newblock M.~Giudici and C.~E. Praeger,
\newblock Completely transitive codes in hamming graphs,
\newblock \emph{European Journal of Combinatorics}, \textbf{20} (1999),
  647--662.

\bibitem{GvT}
\newblock J.~Goethals and H.~VanTilborg,
\newblock Uniformly packed codes,
\newblock \emph{Philips Research Reports}, \textbf{30} (1975), 9--36.

\bibitem{Hugh}
\newblock D.~Hughes and F.~Piper,
\newblock \emph{Design Theory},
\newblock Cambridge University Press, 1985.

\bibitem{Koo}
\newblock J.~Koolen, D.~Krotov and B.~Martin,
\newblock Completely regular codes, \\ \emph{https://sites.google.com/site/completelyregularcodes}.

\bibitem{Lind}
\newblock K.~Lindstr{\"o}m,
\newblock All nearly perfect codes are known,
\newblock \emph{Information and Control}, \textbf{35} (1977), 40--47.

\bibitem{MacW}
\newblock F.~J. MacWilliams and N.~J.~A. Sloane,
\newblock \emph{The theory of error-correcting codes},
\newblock Elsevier, 1977.

\bibitem{Neum}
\newblock A.~Neumaier,
\newblock Completely regular codes,
\newblock \emph{Discrete mathematics}, \textbf{106} (1992), 353--360.

\bibitem{SZZ}
\newblock N.~Semakov, V.~A. Zinoviev and G.~Zaitsev,
\newblock Uniformly packed codes,
\newblock \emph{Problemy Peredachi Informatsii}, \textbf{7} (1971), 38--50.

\bibitem{Sole}
\newblock P.~Sol{\'e},
\newblock Completely regular codes and completely transitive codes,
\newblock \emph{Discrete Mathematics}, \textbf{81} (1990), 193--201.

\bibitem{Dam}
\newblock E.~R. van Dam, J.~H. Koolen and H.~Tanaka,
\newblock Distance-regular graphs,
\newblock \emph{arXiv preprint arXiv:1410.6294}.

\bibitem{vTi}
\newblock H.~C.~A. van Tilborg,
\newblock \emph{Uniformly packed codes},
\newblock Technische Hogeschool Eindhoven, 1976.

\bibitem{Rif1}
\newblock V.~Zinoviev and J.~Rif{\`a},
\newblock On new completely regular q-ary codes,
\newblock \emph{Problems of Information Transmission}, \textbf{43} (2007),
  97--112.

\end{thebibliography}

\end{document}